\definecolor{shadecolor}{rgb}{1,0.8,0.3}
\numberwithin{equation}{section}
\theoremstyle{plain}
\newtheorem{thm}{Theorem}[section] 
\newtheorem{theorem}[thm]{Theorem}
\newtheorem{proposition}[thm]{Proposition}
\newtheorem{corollary}[thm]{Corollary}
\newtheorem{lemma}[thm]{Lemma}
\theoremstyle{definition}
\newtheorem{question}[thm]{Question}
\newtheorem{remark}[thm]{Remark}
\theoremstyle{remark}
\newcommand{\cl}[1]{\mathcal{#1}}
\newcommand{\bb}[1]{\mathbb{#1}}
\begin{document}

\title{On a non-commutative analogue of a classical result of Namioka and Phelps}
\author{A. Samil Kavruk}
\thanks{2010 Mathematics Subject Classification. Primary 46L06, 46L07; Secondary 46L05, 47L25, 47L90.}
\thanks{Key words. operator system, tensor product, nuclear C*-algebras, Connes' embedding problem, partition of unity.}
\begin{abstract}

A classical result of Namioka and Phelps states that the square 
is a test object to verify semi-simplexity in the tensor theory of convex compact sets. 
By using the quantization of generalized Namioka-Phelps test spaces we formulate a 
nuclearity criteria for C*-algebras, which establishes a non-commutative version of their result.
The proof we suggest covers the nuclearity characterization
via non-commutative polyhedron outlined by Effros \cite{Effros}. Several matrix systems studied by Farenick and Paulsen \cite{farenick--paulsen2011} are shown to be
test systems for nuclearity. We also prove that the standard Namioka-Phelps test space 
is C*-nuclear. We propose a 
partition of unity property for C*-algebras which distinguishes nuclear C*-algebras
among the others.


\end{abstract}

\maketitle
Non-commutative cubes and polygons have a subtle role in the study
of quantum correlations on a bipartite quantum
systems and are essential instruments of proving the equivalence of
Connes' embedding problem \cite{Connes} and the matricial Tsirelson's problem
in quantum information theory  \cite{FKPT-discrete}, \cite{Junge}, \cite{Horodecki}. Further properties of these objects, including
their representations via quotients and duality, are  extensively studied in \cite{FKPT-discrete} within
a more general context, namely, operator systems arisen from group representations. Similar objects used
in the study of injectivity in von Neumann algebras \cite{Effros}. The equivalence of 
weak expectation property and tight Riesz interpolation property, along with the Riesz decomposition property \cite{FKPT-discrete}, 
is based on nuclearity related properties of non-commutative cubes \cite{kavruk2012}.
In this paper we obtain further properties of these objects from a predual viewpoint and formulate a non-commutative
analogue of a classical result of Namioka and Phelps on function systems \cite{NP}. Our characterization also yields
a non-commutative partition of unity property that characterizes nuclear C*-algebras.

\vspace{0,2cm}

A C*-algebra $\cl A$ is called \textit{nuclear} if the injective and the projective tensor products coincide on $\cl A \otimes \cl B$ for every C*-algebra $\cl B$. 
Choi and Effros characterize nuclearity by completely positive factorization and approximation properties \cite{ChoiEffros0}, 
which complete the sufficiency shown in \cite{Lance2}. Injectivity of bidual von Neumann algebra, following its function 
theoretic predecessor (below), exhibits another formulation of nuclearity \cite{ChoiEffros}. The techniques
accumulated in this context have a fundamental role in general operator theory, in particular in nuclearity theory of operator
systems \cite{Han-Paulsen}, \cite{KirchbergCAR} e.g.
Our main goal in this paper is to obtain objects that verify nuclearity.
 
\vspace{0,2cm}

We let $\mathfrak{r}\mathfrak{K}$ denote the category of real Kadison spaces \cite{Kadison} and set 
$$
\cl R_{k,n} = \{ (a_i) \in \ell^\infty_{kn}(\bb R)  : \sum_{i=1}^k a_i =  \sum_{i=k+1}^{2k} a_i = \cdots =  \sum_{i=kn-k+1}^{nk} a_i  \}.
$$
A result of Namioka and Phelps states that  the state space of $\cl R_{2,2}$, the square, 
is a test object to verify semi-simplexity in the theory of tensor products of compact convex sets. 
They further prove that a Kadison space is nuclear if and only if its state space is semi-simplex \cite{NP}.
Perhaps their nuclearity characterization via Choquet simplexes can be best summarized if one manifests the nuclearity quartette in the category of Kadison spaces:

\begin{theorem}
The following are equivalent for $\cl V\in\mathfrak{rK}:$
\begin{enumerate}
 \item $\cl V$ is nuclear, that is, for every $\cl W \in \mathfrak{rK}$ we have a canonical order isomorphism
$\cl V\otimes_{\epsilon}\cl W = \cl V\otimes_{\pi}\cl W;$
 \item $\cl V^{**}$, the bidual Kadison space, is an injective object in $\mathfrak{rK}$ (consequently of the form $C(X)$, where $X$ is Stonean);
 \item the identity on $\cl V$ can be approximated via unital, 
positive maps through finite dimensional real $l^{\infty}$ spaces;
 \item we have a canonical order isomorphism $\cl V\otimes_{\epsilon}\cl R_{2,2} = \cl V\otimes_{\pi}\cl R_{2,2}$.
\end{enumerate}
\end{theorem}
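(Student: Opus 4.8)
The plan is to establish the cycle $(1)\Rightarrow(4)\Rightarrow(2)\Rightarrow(3)\Rightarrow(1)$, of which only $(4)\Rightarrow(2)$ carries genuine content; the other three arrows are a function-space rehearsal of the Choi--Effros circle. The implication $(1)\Rightarrow(4)$ is just the special case $\cl W=\cl R_{2,2}$ of the hypothesis. For $(3)\Rightarrow(1)$ I would first record that finite-dimensional real $\ell^\infty$ spaces are nuclear, since $\ell^\infty_n\otimes_{\epsilon}\cl W=\ell^\infty_n\otimes_{\pi}\cl W=\cl W^{n}$ with the coordinatewise order, by a direct partition-of-unity computation; then, writing $\id_{\cl V}=\lim_\lambda\psi_\lambda\varphi_\lambda$ with $\varphi_\lambda\colon\cl V\to\ell^\infty_{n_\lambda}$ and $\psi_\lambda\colon\ell^\infty_{n_\lambda}\to\cl V$ unital and positive, for any $\cl W$ and any $u$ that is $\epsilon$-positive in $\cl V\otimes\cl W$ the elements $(\psi_\lambda\otimes\id)(\varphi_\lambda\otimes\id)(u)$ are $\pi$-positive and converge to $u$, so $u$ is $\pi$-positive because the projective cone is closed. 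That is the only inclusion needing proof.

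For $(2)\Rightarrow(3)$: an injective object of $\mathfrak{rK}$ is order isomorphic to $C(X)$ for some Stonean $X$ (the order-theoretic Kelley--Nachbin--Goodner--Hasumi theorem), and on such a $C(X)$ the identity is the pointwise limit of unital positive finite-rank projections factoring through finite-dimensional $\ell^\infty$ spaces, obtained by averaging over finite clopen partitions of $X$ (extremal disconnectedness making these genuine projections). Composing the canonical embedding $\cl V\hookrightarrow\cl V^{**}$ with these maps produces a net $\cl V\to\ell^\infty_{n_\lambda}\to\cl V^{**}$ of unital positive maps whose compositions converge to $\cl V\hookrightarrow\cl V^{**}$; a principle-of-local-reflexivity argument for Kadison spaces then perturbs the second leg so that it lands in $\cl V$ within arbitrarily small error, which is $(3)$.

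The arrow $(4)\Rightarrow(2)$ is where the square functions as the test object, and is the main obstacle. Realize $\cl V\otimes\cl R_{2,2}$ inside $\cl V\otimes\ell^\infty_4=\cl V^{4}$ as $\{(w_1,w_2,w_3,w_4)\in\cl V^{4}:w_1+w_2=w_3+w_4\}$. Testing against product states shows at once that the $\epsilon$-positive cone is $\{(w_i):w_i\ge 0\text{ in }\cl V,\ w_1+w_2=w_3+w_4\}$, whereas, since the positive cone of $\cl R_{2,2}$ is generated by its four extreme rays $(1,0,1,0)$, $(1,0,0,1)$, $(0,1,1,0)$, $(0,1,0,1)$, the $\pi$-positive cone is the closure of $\{(p+q,\,r+s,\,p+r,\,q+s):p,q,r,s\ge 0\text{ in }\cl V\}$. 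Thus the order isomorphism in $(4)$ says exactly that every relation $w_1+w_2=w_3+w_4$ with all $w_i\ge 0$ admits, at least approximately, a common refinement $w_i=\sum_j c_{ij}$ with $c_{ij}\ge 0$; taking bounded weak$^*$ limits of such refinements transports this to an exact decomposition in $\cl V^{**}$, so $\cl V^{**}$ has the Riesz decomposition property. Since a bidual ordered unit space is monotone complete, and an Archimedean, monotone complete ordered unit space with the Riesz decomposition property is a complete vector lattice, $\cl V^{**}=C(X)$ with $X$ Stonean, hence injective in $\mathfrak{rK}$; this yields $(2)$ together with its parenthetical consequence.

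The two steps I expect to cost real work are: (i) the closure-and-bidual bookkeeping in $(4)\Rightarrow(2)$, where one must check that ``$\otimes_{\epsilon}=\otimes_{\pi}$'' is an honest order isomorphism, so that the refinement survives the projective-cone closure and bootstraps from $\cl V$ to all of $\cl V^{**}$; and (ii) the local-reflexivity step in $(2)\Rightarrow(3)$, the one point where finite-dimensional intermediate $\ell^\infty$ spaces must be manufactured rather than merely exploited.
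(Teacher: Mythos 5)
First, a point of comparison: the paper does not prove this theorem at all --- it is stated as background, with the equivalence of (1) and (4) attributed to \cite{NP} and the rest referred to \cite{Effros} --- so your proposal can only be measured against the classical arguments those references encode. Within that frame, your cycle is the right one, (1)$\Rightarrow$(4) and (3)$\Rightarrow$(1) are fine, and your identification of the two cones on $\cl V\otimes\cl R_{2,2}$ (the $\epsilon$-cone being coordinatewise positivity subject to $w_1+w_2=w_3+w_4$, the $\pi$-cone the Archimedean closure of the refinements $(p+q,\,r+s,\,p+r,\,q+s)$) is exactly correct: condition (4) is an approximate Riesz decomposition property for $\cl V$.

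The two places you flag as ``real work'' are, however, genuine gaps as written. In (4)$\Rightarrow$(2), bounded weak$^*$ limits of approximate refinements only produce, for positive $a,b,c,d\in\cl V$ with $a+b=c+d$, an exact refinement \emph{in} $\cl V^{**}$; that is the decomposition property for quadruples coming from $\cl V$, not for arbitrary positive elements of $\cl V^{**}$, and Riesz decomposition does not pass from a weak$^*$-dense subcone to the whole cone by a soft limiting argument (one needs a correction scheme, and none is indicated). The classical repair is to dualize instead of bidualize: approximate Riesz decomposition in $\cl V$ is equivalent (And\^o--Ellis, or via the semi-simplex characterization of \cite{NP}) to exact Riesz decomposition in $\cl V^{*}$, i.e.\ $\cl V^{*}$ is a lattice of $L$-type, whence $\cl V^{**}=(\cl V^{*})^{*}$ is an order-complete $M$-space with unit, so $\cl V^{**}\cong C(X)$ with $X$ Stonean and injectivity follows from the order-complete Hahn--Banach theorem; this bypasses the bootstrap entirely. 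In (2)$\Rightarrow$(3), the appeal to ``a principle of local reflexivity for Kadison spaces'' preserving unitality and positivity is not an available off-the-shelf tool; converting unital positive factorizations $\cl V\to\ell^{\infty}_{n}\to\cl V^{**}$ into ones landing in $\cl V$ is precisely the delicate step of the commutative Choi--Effros-type argument, and is classically handled by simplex-space techniques (peaked partitions of unity in the spirit of Lazar--Lindenstrauss, or directly from the structure theory underlying \cite{NP} and \cite{Effros}) together with a point-weak to point-norm convexity argument, rather than by a perturbation principle. So the architecture of your proof is sound and faithful to the classical route, but these two steps need actual arguments, not placeholders.
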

\noindent Here $\otimes_{\epsilon}$ and $\otimes_{\pi}$ denote the injective and projective tensor products in $\mathfrak{rK}$. 
We refer the reader to \cite{Effros} for details.
The equivalence of (1) and (4) is due to Namioka and Phelps \cite{NP}. 
We refer $\cl R_{2,2}$ as the Namioka-Phelps test space and $\cl R_{n,k}$ as the 
generalized Namioka-Phelps spaces. In the above theorem $\cl R_{2,2}$ can be replaced by  
$\cl R_{n,k}$ for any $n\geq 2$, $k\geq 2$. 

$ $

By an Archimedean order unit space, AOU-space in short, we mean a complex Kadison space (or a complex function system) \cite{Paulsen-Tomforde}, \cite{Psulsen-Tomforde-Todorov}. 
The complexified versions of Namioka-Phelps test spaces are simply
$$
\cl W_{n,k} = \{(a_i)\in \ell^\infty_{kn}:
 \sum_{i=1}^k a_i =  \sum_{i=k+1}^{2k} a_i = \cdots =  \sum_{i=kn-k+1}^{nk} a_i\}.
$$
The statements of above theorem equally hold in AOU-space category. $\cl W_{n,k}$ has also a natural
operator system structure, which coincides with the super minimal quantization of $\cl W_{n,k}$ in the sense of \cite{Xabli}.
The duality correspondence between the test spaces and the non-commutative polygons is observed independently in \cite{FKPT-discrete} and \cite{OZAWA-duality}.
To be more precise, let $*_{i=1}^n\bb Z_k$ be the full free product of cyclic group $\bb Z_k$ and let C*($*_{i=1}^n\bb Z_k$)
denote its full group C*-algebra. We define the non-commutative polygon
$$
\cl{NP}_{n,k} = {\rm span} \{\lambda(g_i^j)\}_{j=0;i=1}^{k-1;n} \subset C^*( *_{i=1}^n\bb Z_k),
$$ 
where $g_i^j$ is the $j^{th}$ power of the generator $g_i$ of $\bb Z_k$ appearing on the $i^{th}$ order. 

\begin{theorem}[\cite{FKPT-discrete}, \cite{OZAWA-duality}] $\cl {NP}_{n,k}^* \cong \cl W_{n,k}$ completely order isomorphically. 
\end{theorem}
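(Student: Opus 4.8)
\emph{Proof sketch.} The plan is to exhibit an explicit linear isomorphism $\cl{NP}_{n,k}^*\to\cl W_{n,k}$ and then to verify, one matrix level at a time, that it and its inverse are completely positive; essentially all the content will be concentrated in a single simultaneous Naimark dilation. Let $\omega$ be a primitive $k$-th root of unity and, for $i=1,\dots,n$, let $p_i^{(j)}=\tfrac1k\sum_{l=0}^{k-1}\omega^{-jl}\lambda(g_i^l)$ ($j=0,\dots,k-1$) be the spectral projections of the unitary $\lambda(g_i)$, i.e.\ the minimal projections of the $i$-th copy of $C^*(\bb Z_k)\cong\ell^\infty_k$ sitting inside $C^*(*_{i=1}^n\bb Z_k)$. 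These satisfy $\sum_j p_i^{(j)}=1$ for every $i$ and, since $\lambda(g_i^l)=\sum_j\omega^{jl}p_i^{(j)}$, they span $\cl{NP}_{n,k}$; counting distinct group elements gives $\dim\cl{NP}_{n,k}=n(k-1)+1=\dim\cl W_{n,k}$. I would then define $\theta\colon\cl{NP}_{n,k}^*\to\ell^\infty_{kn}$ by $\theta(\nph)=\bigl(\nph(p_i^{(j)})\bigr)_{i,j}$, indexing the $kn$ coordinates by the pairs $(i,j)$. Since $\sum_j\nph(p_i^{(j)})=\nph(1)$ is the same for all $i$, $\theta$ lands in $\cl W_{n,k}$; it is injective because the $p_i^{(j)}$ span, hence a linear isomorphism by the dimension count; and it carries the canonical trace of $C^*(*_{i=1}^n\bb Z_k)$ — which restricts to a faithful state on $\cl{NP}_{n,k}$ and which I take as the order unit of $\cl{NP}_{n,k}^*$ — to $\tfrac1k(1,\dots,1)$, a positive multiple of the order unit of $\cl W_{n,k}$. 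So $\theta$ is unital after the harmless rescaling.

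Because $\cl W_{n,k}$ carries its super minimal quantization, namely the operator system structure inherited from the C*-algebra $\ell^\infty_{kn}$, proving that $\theta$ is a complete order isomorphism amounts to the following assertion for each $p$: a linear map $\Phi\colon\cl{NP}_{n,k}\to M_p$ is completely positive if and only if $\Phi(p_i^{(j)})\geq 0$ for all $i,j$. Indeed, under the $p$-th amplification of $\theta$ the map $\Phi$ corresponds to the tuple $\bigl(\Phi(p_i^{(j)})\bigr)_{i,j}\in M_p(\cl W_{n,k})$, and positivity in $M_p(\cl W_{n,k})$ just means positivity of all $kn$ entries of the tuple. The forward implication is trivial since each $p_i^{(j)}$ is positive in $\cl{NP}_{n,k}$. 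For the converse I would first reduce to unital $\Phi$ — replacing $\Phi$ by $\Phi+\varepsilon\tau(\cdot)I_p$ makes $\Phi(1)$ invertible without disturbing positivity of the $\Phi(p_i^{(j)})$, conjugating by $\Phi(1)^{-1/2}$ makes $\Phi$ unital, and the cp cone is closed — and then observe that for such a $\Phi$ the operators $A_i^{(j)}:=\Phi(p_i^{(j)})$ form, for each fixed $i$, a $k$-outcome positive operator-valued measure (POVM) on $\bb C^p$.

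The crux is the lemma that such POVMs can be dilated with a \emph{common} isometry: given $n$ POVMs $\{A_i^{(j)}\}_{j=0}^{k-1}$, $i=1,\dots,n$, on a Hilbert space $H$, there exist $K\supseteq H$ and $n$ projection-valued measures (PVMs) $\{P_i^{(j)}\}_{j=0}^{k-1}$ on $K$ with $A_i^{(j)}=V^*P_i^{(j)}V$ for the inclusion isometry $V\colon H\hookrightarrow K$ and all $i,j$ at once. I would prove this by induction on $n$, the case $n=1$ being the classical Naimark dilation. For the step, suppose the first $n-1$ POVMs have been dilated on some $K'\supseteq H$ (inclusion $V'$) by PVMs $\{P_i^{(j)}\}_{i<n}$. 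Push the $n$-th POVM up to $K'$ by $\widehat A_n^{(j)}:=V'A_n^{(j)}(V')^*+\tfrac1k\bigl(I_{K'}-V'(V')^*\bigr)$; this is again a POVM on $K'$ and $(V')^*\widehat A_n^{(j)}V'=A_n^{(j)}$. Dilate $\{\widehat A_n^{(j)}\}$ by Naimark to a PVM $\{P_n^{(j)}\}$ on some $K\supseteq K'$, and extend each earlier $\{P_i^{(j)}\}$ from $K'$ to $K$ block-diagonally with respect to $K'\oplus(K\ominus K')$, adjoining an arbitrary PVM on $K\ominus K'$. As the extended projections restrict on $K'$ to the original ones, chasing the compressions along $H\hookrightarrow K'\hookrightarrow K$ yields $A_i^{(j)}=V^*P_i^{(j)}V$ for every $i$, completing the induction.

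Granting the lemma, the PVMs $\{P_i^{(j)}\}_j$ assemble into unitaries $U_i:=\sum_j\omega^jP_i^{(j)}$ with $U_i^k=I$, hence into a $*$-representation $\pi$ of $C^*(*_{i=1}^n\bb Z_k)$ with $\pi(\lambda(g_i))=U_i$, so that $\pi(p_i^{(j)})=P_i^{(j)}$. Then $V^*\pi(\cdot)V$ is a unital completely positive map on $C^*(*_{i=1}^n\bb Z_k)$ agreeing with $\Phi$ on the spanning family $\{p_i^{(j)}\}$; thus $\Phi$ extends to a ucp map on the whole C*-algebra and is, in particular, completely positive. This gives the assertion of the preceding paragraph at every level, so $\theta$ and all its amplifications are order isomorphisms, and $\theta\colon\cl{NP}_{n,k}^*\to\cl W_{n,k}$ is the asserted unital complete order isomorphism. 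I expect the only real difficulty to be the simultaneous dilation lemma — arranging one dilating isometry valid for all $n$ POVMs simultaneously, rather than $n$ unrelated Naimark dilations; what makes the iterative argument succeed is precisely the freeness of $*_{i=1}^n\bb Z_k$, which imposes no commutation among the $P_i^{(j)}$ for different $i$. (Conceptually this says $\cl{NP}_{n,k}$ is the operator system coproduct of $n$ copies of $\ell^\infty_k$, whose dual is transparently $\cl W_{n,k}$.)
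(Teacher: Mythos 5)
Your argument is correct, and since the paper itself offers no proof of this statement (it is quoted from \cite{FKPT-discrete} and \cite{OZAWA-duality}), the right comparison is with those sources: your route --- identifying matrix-level positivity in $\cl{NP}_{n,k}^*$ with tuples of $k$-outcome POVMs via the spectral projections $p_i^{(j)}$, dilating all $n$ POVMs simultaneously by an iterated Naimark construction, and invoking the universal property of $*_{i=1}^n\bb Z_k$ to assemble the resulting PVMs into a representation --- is essentially the standard proof given there. The only cosmetic point is the choice of Archimedean order unit on the dual (a faithful state, e.g.\ the restricted trace), which your rescaling remark handles adequately.
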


We shall start by proving:
\begin{theorem}\label{thm-main}
A unital C*-algebra $\cl A$ is nuclear if and only if we have a canonical complete order isomorphism
$$
\cl A \otimes_{\min} \cl W_{n,k} = \cl A \otimes_{\max} \cl W_{n,k}
$$
for every $n,k$ $($equivalently, for some $n\geq 2$, $k\geq 2$ with $(k,n)\neq (2,2)$.$)$
\end{theorem}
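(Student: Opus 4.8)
The plan is to prove both implications by passing through the operator-system tensor product machinery, using the duality $\cl{NP}_{n,k}^* \cong \cl W_{n,k}$ (Theorem above) together with the Choi--Effros characterization of nuclearity.

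For the \emph{easy direction}, suppose $\cl A$ is nuclear. Nuclearity of a C*-algebra implies that $\cl A$ is $(\min,\max)$-nuclear as an operator system, i.e.\ $\cl A \otimes_{\min} \cl S = \cl A \otimes_{\max} \cl S$ for \emph{every} operator system $\cl S$; this is a standard consequence (via Kirchberg / Han--Paulsen type arguments) of the completely positive approximation property. Applying this with $\cl S = \cl W_{n,k}$ gives the conclusion for all $n,k$ at once.

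For the \emph{hard direction}, suppose $\cl A \otimes_{\min} \cl W_{n,k} = \cl A \otimes_{\max} \cl W_{n,k}$ for some $n\geq 2$, $k\geq 2$. I would first dualize: since $\cl W_{n,k}$ is finite-dimensional with $\cl W_{n,k} = \cl{NP}_{n,k}^*$, the identity $\cl A\otimes_{\min}\cl W_{n,k} = \cl A\otimes_{\max}\cl W_{n,k}$ should translate, via the functoriality of the dual operator system and the known min/max duality for tensor products (the pairing $(\cl S\otimes_{\min}\cl T)^* = \cl S^*\otimes_{\max}\cl T^*$ in finite dimensions, and the fact that a complete order isomorphism dualizes to one), into a lifting/extension property for completely positive maps into $\cl A^{**}$ (or into $\cl A$ after a routine reduction) that factor through $\cl{NP}_{n,k}$. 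Concretely I expect to extract that every unital completely positive map from $\cl{NP}_{n,k}$ into a quotient-type target lifts, which is precisely the kind of statement that detects injectivity of $\cl A^{**}$. The key point is that $\cl{NP}_{n,k}$, being the span of unitaries coming from a free product of cyclic groups, is rich enough to serve as a ``non-commutative polyhedron'': the argument of Effros \cite{Effros} shows that testing the tensorial equality against such polyhedra suffices to build a net of unital completely positive factorizations $\cl A \to M_{d_\alpha} \to \cl A^{**}$ (or through finite-dimensional C*-algebras), from which nuclearity of $\cl A$ follows by Choi--Effros \cite{ChoiEffros0}, \cite{ChoiEffros}. The restriction $(k,n)\neq(2,2)$ enters here exactly as in the classical Namioka--Phelps result: the square $\cl R_{2,2}$ (equivalently $\cl W_{2,2}$) is a commutative object whose quantization is too degenerate to detect non-commutative failures of nuclearity, whereas for $(k,n)\neq(2,2)$ the system $\cl{NP}_{n,k}$ genuinely contains non-commuting unitaries, and its ``matrix range'' is wide enough to separate the min and max tensor norms on $\cl A\otimes(\cdot)$ unless $\cl A$ is nuclear.

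The \textbf{main obstacle} I anticipate is the dualization step in the hard direction: translating the tensorial equality $\cl A\otimes_{\min}\cl W_{n,k} = \cl A\otimes_{\max}\cl W_{n,k}$ into a usable extension property requires care with the completions and with the fact that $\cl A$ need not be finite-dimensional, so one should work in the bidual $\cl A^{**}$, invoke the self-duality/reflexivity of $\cl W_{n,k}$, and keep track of unitality and the order structure through the duality. A secondary subtlety is verifying that $\cl{NP}_{n,k}$ really does play the role of a sufficiently generic finite-dimensional ``probe'' — i.e.\ that the completely positive approximation of $\id_{\cl A}$ can be assembled from factorizations detected by this single family, mirroring how the square suffices in the commutative theory; this is where the cited outline of Effros \cite{Effros} and the structural results on non-commutative polygons from \cite{FKPT-discrete} do the heavy lifting, and where the hypothesis $(k,n)\neq(2,2)$ is essential.
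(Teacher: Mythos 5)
Your easy direction is fine, and you correctly identify the ultimate target (injectivity of $\cl A^{**}$ plus Choi--Effros), but the core of the hard direction is missing rather than merely unpolished. The dualization you propose does not go through as stated: the Farenick--Paulsen duality $(\cl S\otimes_{\min}\cl T)^*=\cl S^*\otimes_{\max}\cl T^*$ requires \emph{both} factors finite dimensional, and $\cl A$ is not; you flag this as ``the main obstacle'' but offer no mechanism to get around it. The step ``I expect to extract that every ucp map from $\cl{NP}_{n,k}$ into a quotient-type target lifts, which detects injectivity of $\cl A^{**}$'' is precisely the content that has to be proved, and delegating it to ``the argument of Effros'' is circular: Effros's reference is an outline for a different family of polyhedra, and the point of this theorem is to supply a proof that \emph{covers} that outline (see Remark \ref{rem-polyhedra}), not to invoke it. The paper's actual mechanism is quite different and is nowhere in your plan: one fixes an arbitrary representation $\pi:\cl A\to B(H)$, views $\cl A\otimes_{\max}\cl W_{n,k}\subset \cl A\otimes\ell^\infty_{nk}$ as operator $\cl A$-systems, and uses the bimodule-automaticity proposition (any ucp extension of a ucp $\cl A$-bimodule map is again an $\cl A$-bimodule map) to show that every (u)cp map $\cl W_{n,k}\to\pi(\cl A)'$ extends to $\ell^\infty_{nk}$ with range still in the commutant. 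Dualizing only the finite-dimensional leg, this says $Q\otimes\id:(\ell^\infty_{nk})^*\otimes\pi(\cl A)'\to\cl W_{n,k}^*\otimes_{\min}\pi(\cl A)'$ is an order quotient map; combined with projectivity of $\otimes_{\max}$ one gets $\cl{NP}_{n,k}\otimes_{\min}\rho(\cl A)'=\cl{NP}_{n,k}\otimes_{\max}\rho(\cl A)'$ for the universal representation $\rho$, then the WEP characterization via $\cl{NP}_{n,k}$ gives injectivity of $\rho(\cl A)'$, Effros--Lance transfers injectivity to $\rho(\cl A)''\cong\cl A^{**}$, and Choi--Effros concludes nuclearity. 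No completely positive approximation of $\id_{\cl A}$ is ever assembled directly, which is the part of your plan with no supporting argument.

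A secondary but genuine error: your explanation of the hypothesis $(n,k)\neq(2,2)$ is backwards. In the classical Namioka--Phelps theorem the square $(2,2)$ \emph{does} suffice; the exclusion here is a purely non-commutative phenomenon. It is needed because the argument must pass through a Kirchberg-type WEP criterion for $\cl{NP}_{n,k}\subset C^*(\bb Z_k^{*n})$, which requires the free product group to be large enough ($|G_1|+|G_2|\geq 5$); for $(2,2)$ the group $\bb Z_2*\bb Z_2$ is amenable, and indeed $\cl W_{2,2}$ is C*-nuclear (Theorem \ref{thm-W22}), so it cannot detect anything. Your heuristic that $\cl{NP}_{2,2}$ ``contains no non-commuting unitaries'' is also not the right reason---it does contain non-commuting unitaries; the obstruction is nuclearity of $C^*(\bb Z_2*\bb Z_2)$, not commutativity.
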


This characterization allows us to recover the equivalence of semi-nuclearity and nuclearity
in the sense of C. Lance \cite{Lance}. Moreover, we obtain that C$^*_u(\cl W_{n,k})$, the universal C*-algebra of
$\cl W_{n,k}$ \cite{KW}, is a C*-algebraic
object to verify nuclearity (see Corollary \ref{cor-main}). Also see Remark \ref{rem-polyhedra} for an observation on the
nuclearity characterization via Effros' non-commutative polyhedron.
An operator system $\cl S$ is said to be \textit{C*-nuclear} if its minimal and maximal
operator system tensor product with every C*-algebra coincides. 
In sharp contrast to the role of $\cl W_{2,2}$ in AOU-space category we have:
\begin{theorem}\label{thm-W22}
$\cl W_{2,2}$ is C*-nuclear.
\end{theorem}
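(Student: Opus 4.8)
\emph{Proof plan.} The plan is to obtain the C*-nuclearity of $\cl W_{2,2}$ from two structural properties of this operator system: exactness and the double commutant expectation property (DCEP).

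Exactness will be immediate, since $\cl W_{2,2}$ is by construction an operator subsystem of the nuclear C*-algebra $\ell^\infty_4$ and exactness passes to operator subsystems. The place where the value $(n,k)=(2,2)$ is genuinely exceptional is in establishing the DCEP. By the duality $\cl{NP}_{2,2}^{*}\cong\cl W_{2,2}$ stated above, $\cl W_{2,2}$ is completely order isomorphic to $\cl{NP}_{2,2}^{*}$, and $\cl{NP}_{2,2}=\mathrm{span}\{1,\lambda(g_1),\lambda(g_2)\}$ is an operator subsystem of the full group C*-algebra $C^*(\bb Z_2 * \bb Z_2)$. Since $\bb Z_2 * \bb Z_2$ is the infinite dihedral group, hence amenable, $C^*(\bb Z_2 * \bb Z_2)$ is nuclear, and therefore $\cl{NP}_{2,2}$ is exact. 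I would then invoke the duality, valid for finite-dimensional operator systems, between exactness of $\cl S$ and the DCEP of $\cl S^{*}$ (one of the structural equivalences from \cite{kavruk2012}): since $\cl{NP}_{2,2}=\cl W_{2,2}^{*}$ is exact, $\cl W_{2,2}$ has the DCEP. Finally I would combine the two properties through the operator system analogue of Kirchberg's theorem --- an operator system that is both exact and has the DCEP is C*-nuclear --- to conclude that $\cl W_{2,2}$ is C*-nuclear.

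The hard part will be the two structural inputs just used: the exactness $\leftrightarrow$ DCEP duality in finite dimensions, and the implication ``exact $+$ DCEP $\Rightarrow$ C*-nuclear''. These are not formal; they belong to the core of operator system nuclearity theory, although both are by now available. They also make transparent the dichotomy behind Theorem \ref{thm-main}: for every $(n,k)\neq(2,2)$ the group $*_{i=1}^{n}\bb Z_k$ contains a non-abelian free subgroup, so $C^*(*_{i=1}^{n}\bb Z_k)$ is non-nuclear, $\cl{NP}_{n,k}$ fails to be exact, and correspondingly $\cl W_{n,k}$ lacks the DCEP --- which is precisely what lets it detect nuclearity. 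If one prefers a route that sidesteps the abstract machinery, an equivalent goal is to prove that $C^*_u(\cl W_{2,2})$ is a nuclear C*-algebra; this suffices because $\cl W_{2,2}\otimes_{\min}\cl A$ and $\cl W_{2,2}\otimes_{\max}\cl A$ embed completely order isomorphically, with the same underlying vector space, into $C^*_u(\cl W_{2,2})\otimes_{\min}\cl A$ and $C^*_u(\cl W_{2,2})\otimes_{\max}\cl A$ respectively. A direct decomposition of the minimal positive cone of $\cl W_{2,2}\otimes\cl A$ into the maximal one is also conceivable but delicate: the rank-one building blocks coming from the four vertices of the square are provably insufficient --- they reduce matters to a Riesz interpolation problem in $\cl A$ that has no solution in general --- so one would be forced to use ``entangled'' building blocks $\gamma^{*}(p\otimes q)\gamma$ with $p\in M_2(\cl W_{2,2})_+$.
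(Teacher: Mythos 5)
Your reduction of C*-nuclearity to ``exact $+$ DCEP'' is sound: (min,c)-nuclearity coincides with C*-nuclearity, and since min $\leq$ el $\leq$ c this splits into (min,el)-nuclearity (exactness) and (el,c)-nuclearity (DCEP); exactness of $\cl W_{2,2}\subset \ell^\infty_4$ is likewise immediate. The gap is the step that produces the DCEP. There is no duality theorem identifying exactness of $\cl S^*$ with the DCEP of $\cl S$ for finite-dimensional operator systems; the duality established in \cite{kavruk2011}, \cite{kavruk2012} pairs exactness with the \emph{lifting property} ($\cl S$ is exact iff $\cl S^*$ has the OSLP). Moreover the principle you invoke cannot be available in the stated generality: applied to the exact operator system $\cl U_2\subset M_2\oplus M_2$ it would give the DCEP for $\cl U_2^*\cong \cl S_2$, the span of $1,u_1,u_1^*,u_2,u_2^*$ in $C^*(\bb F_2)$, and by \cite{kavruk2012} the DCEP of $\cl S_2$ is \emph{equivalent} to an affirmative answer to Kirchberg's conjecture/Connes' embedding problem; the same objection runs through the exact systems $\cl W_{n,k}$, $(n,k)\neq(2,2)$, whose duals $\cl{NP}_{n,k}$ having the DCEP is again equivalent to the embedding problem (Theorem \ref{thm Connes}) --- which is precisely why those systems detect nuclearity in Theorem \ref{thm-main}. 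What exactness of $\cl{NP}_{2,2}$ legitimately buys you is the lifting property of $\cl W_{2,2}$, and ``exact $+$ lifting property $\Rightarrow$ C*-nuclear'' is not an available implication either; if it were, the C*-nuclearity of $\cl W_{2,2}$ would not have been the open question of \cite{kavruk2011} that this theorem settles. So the single step carrying the entire content of the theorem is unsupported.

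For comparison, the paper does not go through the DCEP at all: it proves in the Appendix that C*-nuclearity of a finite-dimensional operator system passes to its dual --- positivity of the maximally entangled element in $\cl S\otimes_{\rm c}\cl S^*$ forces the embedding $\cl S\hookrightarrow C^*_u(\cl S)$ to factor approximately through matrix algebras --- and then quotes the C*-nuclearity of $\cl{NP}_{2,2}$ from \cite{FKPT-discrete}. Note that the latter needs more than nuclearity of the ambient algebra $C^*(\bb Z_2*\bb Z_2)$: an operator subsystem of a nuclear C*-algebra need not be C*-nuclear, as $\cl W_{n,k}\subset\ell^\infty_{nk}$ with $(n,k)\neq(2,2)$ shows via Theorem \ref{thm-main}. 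To repair your argument you would need an actual proof that $\cl W_{2,2}$ has the DCEP (or, as you suggest but do not prove, that $C^*_u(\cl W_{2,2})$ is nuclear --- that reduction itself is correct, using $\cl W_{2,2}\otimes_{\max}\cl A\subset C^*_u(\cl W_{2,2})\otimes_{\max}\cl A$ and injectivity of min); the shortest available substitute is the paper's duality theorem combined with the known C*-nuclearity of $\cl{NP}_{2,2}$.
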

\noindent The proof relies on the fact that the C*-nuclearity is 
preserved under duality which we sketch in the appendix. (This also settles the problem
exposed by the author in \cite{kavruk2011}.)

\cite{farenick--paulsen2011} exhibits several examples of operator system quotient and duality. In particular, we shall consider
the dual of \textit{words of length two}:
$$
\cl E_n = \{ [a_{ij}] \in M_n: a_{ii} = a_{jj} \mbox{ for all } i,j = 1,2,...,n  \}
$$
and the dual of the universal operator system generated by $n$-contraction:
$$
\cl U_n = \left\{ \bigoplus_{k=1}^n \left[\begin{array}{cc}  a_{11}^k & a_{12}^k \\   a_{21}^k & a_{22}^k  \end{array} \right]:\;
a_{ii}^l = a_{jj}^m \; \mbox{ for }1\leq l,m \leq n, \; 1 \leq i,j \leq 2 \right\}.
$$
The technique we develop allows us to extend  our nuclearity characterization:

\begin{theorem}\label{thm-Farenick-Paulsen Spaces}
Consider the following collection of operator systems:
$
\cl C = \{ \cl E_n\}_{n=3}^\infty \cup \{ \cl U_n\}_{n=2}^\infty.
$
A unital C*-algebra $\cl A$ is nuclear if and only if there exists $\cl S \in \cl C$ such that
we have a complete order isomorphism
$
\cl A \otimes_{\min} \cl S = \cl A \otimes_{\max} \cl S.
$
\end{theorem}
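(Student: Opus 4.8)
The plan is to deduce the statement from Theorem~\ref{thm-main}. One direction is free: a nuclear C*-algebra $\cl A$ satisfies $\cl A\otimes_{\min}\cl R=\cl A\otimes_{\max}\cl R$ for \emph{every} operator system $\cl R$, so in fact every member of $\cl C$ works. For the converse one is handed $\cl A\otimes_{\min}\cl S=\cl A\otimes_{\max}\cl S$ for a single $\cl S\in\cl C$ and must extract nuclearity. The unifying observation is that each $\cl S\in\cl C$ is a finite-dimensional operator subsystem of a finite-dimensional \emph{nuclear} C*-algebra -- $\cl E_n\subseteq M_n$ and $\cl U_n\subseteq\bigoplus_{k=1}^nM_2$ -- of codimension $n-1$, cut out by forcing a partition of unity of states (the diagonal-entry states $x\mapsto x_{ii}$) to coincide; this is precisely the structure that drives the proof of Theorem~\ref{thm-main}, where $\cl W_{n,k}\subseteq\ell^\infty_{kn}$ plays the same role. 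The strategy is therefore to re-run that proof with $\cl S$ in place of $\cl W_{n,k}$.

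A useful warm-up, which also isolates the transfer tool, is the case $\cl U_n$ with $n\ge3$. Here one has the \emph{transfer lemma}: if $\cl T$ is a completely order complemented operator subsystem of $\cl S$ -- a u.c.p.\ idempotent $\varphi\colon\cl S\to\cl S$ with range $\cl T$ -- then $\cl A\otimes_{\min}\cl S=\cl A\otimes_{\max}\cl S$ forces $\cl A\otimes_{\min}\cl T=\cl A\otimes_{\max}\cl T$, because $\id_{\cl A}\otimes\varphi$ is a u.c.p.\ idempotent on both tensor products with the same range, and $\otimes_{\min}$ is injective while a u.c.p.\ idempotent turns $\cl A\otimes_{\max}\cl T\hookrightarrow\cl A\otimes_{\max}\cl S$ into a complete order embedding, so the two structures on $\cl A\otimes\cl T$ must agree. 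Applied with $\varphi=\bigoplus_{k=1}^n\tfrac12(\id+\mathrm{Ad}_{\sigma})$, where $\sigma=\left(\begin{smallmatrix}0&1\\1&0\end{smallmatrix}\right)$ acts in each block, this maps $\cl U_n$ u.c.p.-idempotently onto the symmetric blocks $\left\{\,\left(\begin{smallmatrix}c&b\\b&c\end{smallmatrix}\right)_{k}:c\text{ constant}\,\right\}$, which -- identifying $\left(\begin{smallmatrix}c&b\\b&c\end{smallmatrix}\right)$ with a point of $\bb C^2$ -- is completely order isomorphic to $\cl W_{n,2}$; since $(n,2)\neq(2,2)$, Theorem~\ref{thm-main} gives nuclearity. (For $n=2$ the construction only yields $\cl W_{2,2}$, C*-nuclear by Theorem~\ref{thm-W22} and hence useless -- a warning that the shortcut is special to $n\ge3$.)

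For the remaining members of $\cl C$ -- $\cl E_n$ with $3\le n\le5$, and $\cl U_2$ -- no such shortcut exists, since the commutative C*-envelope of a useful $\cl W_{m,\ell}$ (of dimension $m\ell\ge6$ when $(m,\ell)\neq(2,2)$) is too large to complete-order-embed in $M_n$ ($n\le5$) or in $\bigoplus^nM_2$ ($n=2$); so one has no choice but to carry out the re-run. Concretely: combine $\cl A\otimes_{\min}\cl S=\cl A\otimes_{\max}\cl S$ with the automatic identity $\cl A\otimes_{\min}C^*_{\mathrm{env}}(\cl S)=\cl A\otimes_{\max}C^*_{\mathrm{env}}(\cl S)$ (nuclearity of $M_n$, resp.\ $\bigoplus^nM_2$), and track how a positive element of $\cl A\otimes_{\max}\cl S$, which the hypothesis keeps positive after passage to the nuclear over-algebra, can be ``refined'' along the equalised partition of unity -- exactly the partition of unity property of $\cl A$ that the proof of Theorem~\ref{thm-main} converts into the completely positive factorisation characterising nuclearity; the duality $\cl{NP}_{m,\ell}^{*}\cong\cl W_{m,\ell}$ and the behaviour of C*-nuclearity under duality (appendix) are used to confirm that $\cl E_n^{d}$ and $\cl U_2^{d}$ encode the non-nuclearity in the same way $\cl{NP}_{n,k}$ does. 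The hard part is this last step: isolating the exact structural hypothesis on an operator system $\cl S$ under which the Theorem~\ref{thm-main} argument survives, and verifying that $\cl E_n$ ($n\ge3$) and $\cl U_2$ meet it while the degenerate $\cl E_2$, $\cl U_1$ (and $\cl W_{2,2}$) do not -- which is precisely what makes the stated ranges of $n$ sharp.
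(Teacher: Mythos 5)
Your trivial direction is fine, and your conditional-expectation shortcut for $\cl U_n$ with $n\geq 3$ is correct and is a genuinely valid reduction: the blockwise symmetrization $\bigoplus_k\tfrac12(\id+{\rm Ad}_\sigma)$ is a ucp idempotent of $\cl U_n$ whose range, after conjugating each block by the $2\times 2$ Hadamard unitary, is exactly $\cl W_{n,2}\subset\ell^\infty_{2n}$; functoriality of $\otimes_{\max}$ together with injectivity of $\otimes_{\min}$ then passes the identity $\min=\max$ down to the range, and Theorem \ref{thm-main} applies since $(n,2)\neq(2,2)$. This mirrors the device of Remark \ref{rem-polyhedra} and differs from the paper's treatment of $\cl U_n$, which instead goes through the dual system.

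The gap is everything else. The theorem also asserts the conclusion for every $\cl E_n$, $n\geq 3$, and for $\cl U_2$, and for these your proposal contains no argument: you announce a ``re-run'' of Theorem \ref{thm-main} and then concede that isolating and verifying the structural hypothesis making the re-run work is ``the hard part'' left undone. That hypothesis is precisely what must be supplied, and it is neither the partition-of-unity property (which is Section 3, not the mechanism of Theorem \ref{thm-main}) nor the appendix fact that C*-nuclearity passes to duals. The engine of Theorem \ref{thm-main} is that, after the bimodule-extension and quotient-of-duals steps, one arrives at $\min=\max$ on $\cl S^*\otimes\rho(\cl A)'$, and one then needs that $\cl S^*$ \emph{detects WEP} in order to conclude that $\rho(\cl A)'$ is injective (then Effros--Lance and Choi--Effros finish). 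Concretely, the paper uses: $\cl E_n^*\cong M_n/J_n$, which is a unitary operator subsystem generating $C^*_e(M_n/J_n)\cong C^*(\bb F_{n-1})$, so $\min=\max$ with $M_n/J_n$ yields $\min=\max$ with $C^*(\bb F_{n-1})$ by Corollary 5.8 of \cite{kavruk2011} and hence WEP by Kirchberg's theorem (this needs $n\geq 3$); and $\cl U_n^*\cong\cl S_n$, the universal operator system of $n$ contractions, which characterizes WEP by \cite{kavruk2011} (this covers $\cl U_2$). None of these identifications or WEP criteria appear in your proposal, and without them the cases $\cl E_n$ and $\cl U_2$ remain unproved; your parenthetical sorting also quietly leaves $\cl E_n$ for $n\geq 6$ unhandled, since the claimed shortcut there is never exhibited.
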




Generalized test spaces exhibit another formulation of Connes' embedding problem on the embedding of II$_1$-factors \cite{Connes}
in terms of approximate injectivity.
We let $\bb F_{\infty}$ denote the free group on countably many generators and $ \mbox{C*}(\bb F_{\infty})$ denotes the full group C*-algebra.
\begin{theorem}\label{thm Connes}
We fix $n\geq 2$, $k\geq 2$ with $(n,k)\neq (2,2)$. The following are equivalent:
\begin{enumerate}
 \item Connes' embedding problem has an affirmative answer;

\item for every ucp map $\varphi: \cl W_{n,k} \rightarrow \mbox{\rm C*}(\bb F_{\infty})$ and for every $\epsilon>0$ there exists a ucp map
$\tilde \varphi: \ell_{nk}^\infty \rightarrow \mbox{\rm C*}(\bb F_{\infty})$ such that $\|  \varphi - \tilde \varphi|_{\cl W_{n,k}}    \|_{cb} \leq \epsilon$;

\item every positive map $\varphi: \mbox{\rm C*}(\bb F_{\infty}) \rightarrow \cl W_{n,k}$ is nuclear.
\end{enumerate}
\end{theorem}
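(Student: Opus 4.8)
\noindent\emph{Proof strategy.} The plan is to factor the whole statement through Kirchberg's reformulation of Connes' embedding problem together with the duality $\cl{NP}_{n,k}^{*}\cong\cl W_{n,k}$ (\cite{FKPT-discrete}, \cite{OZAWA-duality}) and the machinery behind Theorem~\ref{thm-main}. Recall that $\mbox{C*}(\bb F_{\infty})$ always has the local lifting property, and that, by Kirchberg's theorem, Connes' embedding problem has an affirmative answer if and only if $\mbox{C*}(\bb F_{\infty})$ has the weak expectation property, equivalently $\mbox{C*}(\bb F_{\infty})\otimes_{\min}\mbox{C*}(\bb F_{\infty})=\mbox{C*}(\bb F_{\infty})\otimes_{\max}\mbox{C*}(\bb F_{\infty})$ (see e.g.\ \cite{FKPT-discrete} and the references there). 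Hence it is enough to prove that $(2)$ and $(3)$ are each equivalent to the weak expectation property of $\mbox{C*}(\bb F_{\infty})$.

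For the implications out of $(1)$ I would argue directly. Given a ucp map $\varphi\colon\cl W_{n,k}\to\mbox{C*}(\bb F_{\infty})\subseteq\cl B(H)$, extend it by Arveson's theorem to a ucp map $\psi\colon\ell^\infty_{nk}\to\cl B(H)$ and compose with the weak expectation $\theta\colon\cl B(H)\to\mbox{C*}(\bb F_{\infty})^{**}$; the resulting ucp map $\theta\psi\colon\ell^\infty_{nk}\to\mbox{C*}(\bb F_{\infty})^{**}$ restricts to $\varphi$ on $\cl W_{n,k}$. A ucp map out of $\ell^\infty_{nk}$ is exactly a partition of unity of length $nk$, so $\theta\psi$ is encoded by positive elements $p_1,\dots,p_{nk}$ of $\mbox{C*}(\bb F_{\infty})^{**}$ with $\sum_i p_i=1$; using Kaplansky density together with the finite dimensionality of $\cl W_{n,k}$ (only finitely many normalized functionals of the $p_i$ need to be controlled, all norms on the relevant finite-dimensional space being equivalent) I would push this datum down to a genuine partition of unity in $\mbox{C*}(\bb F_{\infty})$ at the cost of an $\epsilon$ in cb-norm on $\cl W_{n,k}$---this is precisely the non-commutative partition of unity mechanism behind Theorem~\ref{thm-main}---which gives $(1)\Rightarrow(2)$. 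Running the same argument through the duality $\cl{NP}_{n,k}^{*}\cong\cl W_{n,k}$ and the description of nuclear maps by the min-to-max continuity of $\varphi\otimes\mathrm{id}$ yields $(1)\Rightarrow(3)$.

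The substantive direction is $(2)\Rightarrow(1)$ and, dually, $(3)\Rightarrow(1)$: one has to show that the single inclusion $\cl W_{n,k}\subseteq\ell^\infty_{nk}$ already detects the weak expectation property of $\mbox{C*}(\bb F_{\infty})$. Here I would exploit that $\mbox{C*}(\bb F_{\infty})$ has the local lifting property, so that its weak expectation property is equivalent to $\mbox{C*}(\bb F_{\infty})\otimes_{\min}\mbox{C*}(\bb F_{\infty})=\mbox{C*}(\bb F_{\infty})\otimes_{\max}\mbox{C*}(\bb F_{\infty})$; the universal property of full group C*-algebras lets one read an arbitrary ucp map out of $\mbox{C*}(\bb F_{\infty})$ off the generating unitaries spanning the non-commutative polygons $\cl{NP}_{n,k}$ (as $n,k$ vary), while the duality $\cl{NP}_{n,k}^{*}\cong\cl W_{n,k}$ turns hypothesis $(2)$ into the assertion that the minimal and the maximal cones on $\cl{NP}_{n,k}\otimes\mbox{C*}(\bb F_{\infty})$ have the same closure; an approximate factorization through matrix algebras supplied by the local lifting property then upgrades this to the required coincidence for $\mbox{C*}(\bb F_{\infty})\otimes\mbox{C*}(\bb F_{\infty})$, and $(3)$ is treated by the dual argument. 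The exclusion $(n,k)\neq(2,2)$ is essential: by Theorem~\ref{thm-W22} the system $\cl W_{2,2}$ is C*-nuclear and hence far too small to witness any failure of the weak expectation property, whereas for every other $(n,k)$ the polygon $\cl{NP}_{n,k}$ is ``large enough'' for this transfer. I expect the main obstacle to be exactly this last point---verifying that, for $(n,k)\neq(2,2)$, the polygon $\cl{NP}_{n,k}$ is rich enough to detect a possible failure of the weak expectation property---together with the push-down from $\mbox{C*}(\bb F_{\infty})^{**}$ to $\mbox{C*}(\bb F_{\infty})$ in the implications out of $(1)$.
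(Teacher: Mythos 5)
Your outline identifies the right cast of characters (the duality $\cl{NP}_{n,k}^*\cong\cl W_{n,k}$, Kirchberg/\cite{FKPT-discrete}, the local lifting property), but the substantive directions are not actually proved. You reduce $(2)\Rightarrow(1)$ and $(3)\Rightarrow(1)$ to the assertion that the \emph{fixed} pair $(n,k)$ is ``rich enough'' to detect the weak expectation property of $C^*(\bb F_\infty)$, and you explicitly leave that as ``the main obstacle''; but this is precisely the nontrivial input, and your sketched mechanism does not supply it. Indeed, reading ucp maps off generating unitaries ``as $n,k$ vary'' is incompatible with the theorem's fixed $(n,k)$, and the local lifting property does not provide ``approximate factorizations through matrix algebras'' --- that is an approximation property it simply does not give. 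The paper instead \emph{quotes} from \cite{FKPT-discrete} that CEP is equivalent to the complete order isomorphism $\cl{NP}_{n,k}\otimes_{\min}C^*(\bb F_\infty)=\cl{NP}_{n,k}\otimes_{\max}C^*(\bb F_\infty)$ for a fixed $(n,k)\neq(2,2)$, and its own work is the precise translation of that tensor identity into statements $(2)$ and $(3)$: Lemma~\ref{lem-CEP2} (approximate ucp extendability of maps on $\cl W_{n,k}$ is equivalent to $Q\otimes id$ being an order quotient onto $\cl W_{n,k}^*\otimes_{\min}C^*(\bb F_\infty)$, the max-side quotient being free by projectivity of $\otimes_{\max}$), Lemma~\ref{lem-CEP1} (min-positivity of a functional is equivalent to nuclearity of the associated map into the dual, via the representations of min and max), the fact that positive maps into $\cl W_{n,k}$ are automatically completely positive (needed because $(3)$ concerns merely positive maps), and the LLP argument whose only role is to upgrade an \emph{order} isomorphism between min and max on $\cl{NP}_{n,k}\otimes C^*(\bb F_\infty)$ to a \emph{complete} order isomorphism --- a step your outline never addresses even though $(2)$ and $(3)$ only yield order-level information.

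The implication $(1)\Rightarrow(2)$ as you sketch it also has a genuine hole. After extending by Arveson and composing with the weak expectation you have a partition of unity $p_1,\dots,p_{nk}$ in $C^*(\bb F_\infty)^{**}$, and you propose to ``push this down'' by Kaplansky density. But weak*-approximation of the $p_i$ by positive elements of $C^*(\bb F_\infty)$ neither preserves the exact relation $\sum_i p_i=1$ (renormalizing requires invertibility/norm control that weak* convergence does not give) nor yields norm, let alone cb-norm, closeness of the restricted maps on $\cl W_{n,k}$; to repair this one needs an additional convexity argument (point-weak and point-norm closures of the convex set of restrictions of ucp maps $\ell^\infty_{nk}\to C^*(\bb F_\infty)$ coincide), which you do not supply. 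The paper avoids the bidual altogether: both implications between $(1)$ and $(2)$ fall out of Lemma~\ref{lem-CEP2} together with projectivity of the maximal tensor product, and $(1)\Leftrightarrow(3)$ out of Lemma~\ref{lem-CEP1}.
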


Let $\cl A$ be a unital C*-algebra. For self-adjoint elements $a$ and $b$ we shall write $a < b$ if there is a positive scalar $\delta$ such that
$b-a > \delta 1_{\cl A}$. We will say that $\cl A$ has \textit{m-partition of unity property} if for every
$b_1,b_2,...,b_m$ in $\cl A_{sa}$ with $$
0< b_1,b_2,...,b_m < 1
$$
there is a positive integer $n$, a positive element $[a_{ij}]$ in $M_n(\cl A)$ with
$
a_{11} + a_{22} + \cdots + a_{nn} = 1
$
and matrices $C_k = [c^k_{ij}]$, $k=1,2,...,m$, in $M_n$ with $0 < C_1,C_2,...,C_m < I_n$ such that
$$
b_k = \sum_{i,j = 1}^n c_{ij}^k a_{ij}, \;\; \mbox{ for } k = 1,2,...,m.
$$
(The term $k$ appear in matrix entries of $C_k$ are upper indexes.) It follows that:
\begin{theorem}\label{POUP}
\begin{enumerate}
 \item Every unital C*-algebra has 2-partition of unity property.

\item A unital C*-algebra is nuclear if and only if for every $n$, $M_n(\cl A)$ has $m$-partition of unity property for every $m$ $($equivalently for $m=3)$. 
\end{enumerate}
\end{theorem}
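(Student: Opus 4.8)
The plan is to identify the $m$-partition of unity property for a unital C*-algebra $\cl B$ with the coincidence of the first-level positive cones of $\cl B\otimes_{\min}\cl W_{m,2}$ and $\cl B\otimes_{\max}\cl W_{m,2}$, and then to read off both statements from Theorems \ref{thm-W22} and \ref{thm-main}. I would first fix coordinates: in $\ell^\infty_{2m}$, grouping the coordinates into $m$ consecutive pairs, a basis of $\cl W_{m,2}$ is the unit $\mathbf 1$ together with $h_j=e_{2j-1}-e_{2j}$; since the super-minimal operator system structure on $\cl W_{m,2}$ is exactly the one induced by the complete order embedding $\cl W_{m,2}\subseteq\ell^\infty_{2m}$, a self-adjoint $u=U_0\otimes\mathbf 1+\sum_j U_j\otimes h_j$ in $\cl B\otimes\cl W_{m,2}$ is positive in the minimal tensor product precisely when $U_0\pm U_j\geq 0$ in $\cl B$ for all $j$. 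For the maximal side I would use the standard fact that the generators of its first-level positive cone (before the Archimedean closure) are the elements $\sum_{r,s=1}^N a_{rs}\otimes q_{rs}$ with $[a_{rs}]\in M_N(\cl B)_+$ and $[q_{rs}]\in M_N(\cl W_{m,2})_+$; writing $q_{rs}=q^0_{rs}\mathbf 1+\sum_k q^k_{rs}h_k$ and $Q^\ell=[q^\ell_{rs}]$, the latter condition reads $Q^0\pm Q^k\geq 0$ in $M_N$.

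One half of the equivalence is then a direct computation. Assuming $\cl B$ has the $m$-partition of unity property, I would take a min-positive $u$ as above; fixing $\epsilon>0$ and setting $u_\epsilon=u+\epsilon(1\otimes\mathbf 1)$, whose $\mathbf 1$-coefficient $V:=U_0+\epsilon 1_{\cl B}$ is invertible, I put $d_j:=V^{-1/2}U_jV^{-1/2}$, so the $d_j$ are self-adjoint with $\|d_j\|<1$ and $b_j:=\tfrac12(1+d_j)$ satisfy $0<b_j<1$. Feeding $b_1,\dots,b_m$ into the partition-of-unity property produces positive $[a_{rs}]\in M_N(\cl B)$ with $\sum_r a_{rr}=1$ and matrices $C_k$ with $0<C_k<I_N$ and $b_k=\sum_{r,s}(C_k)_{rs}a_{rs}$; then, with $Q^0=I_N$ and $Q^k=2C_k-I_N$ (so $Q^0\pm Q^k\in\{2C_k,\,2(I_N-C_k)\}\geq 0$), one obtains $\sum_{r,s}a_{rs}\otimes q_{rs}=1\otimes\mathbf 1+\sum_j d_j\otimes h_j$, hence $u_\epsilon=V^{1/2}\big(\sum_{r,s}a_{rs}\otimes q_{rs}\big)V^{1/2}$ lies in the generating set above; letting $\epsilon\to 0$ and invoking that the maximal cone is Archimedean gives $u\geq 0$ in $\cl B\otimes_{\max}\cl W_{m,2}$. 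Since $M_n(\cl C\otimes_{\min}\cl W_{m,2})=M_n(\cl C)\otimes_{\min}\cl W_{m,2}$ and similarly for $\max$, this yields: if $M_n(\cl A)$ has the $m$-partition of unity property for every $n$, then $\cl A\otimes_{\min}\cl W_{m,2}=\cl A\otimes_{\max}\cl W_{m,2}$ is a complete order isomorphism.

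For the converse, starting from $0<b_1,\dots,b_m<1$ in $\cl B_{sa}$, I would consider $2u:=1\otimes\mathbf 1+\sum_j(2b_j-1)\otimes h_j$, which is an interior point of the minimal cone precisely because the inequalities are strict; if $\min=\max$ it already lies in the generating set, $2u=\sum_{r,s}a_{rs}\otimes q_{rs}$. A compression onto $(\ker Q^0)^\perp$ followed by the congruence $[q_{rs}]\mapsto(Q^0)^{-1/2}[q_{rs}](Q^0)^{-1/2}$, compensated on the $\cl B$-side via the invariance of $\sum_{r,s}a_{rs}\otimes q_{rs}$ under such congruences, reduces to the case $Q^0=I_N$; matching the $\mathbf 1$- and $h_k$-components then forces $\sum_r a_{rr}=1$ and $\sum_{r,s}(Q^k)_{rs}a_{rs}=2b_k-1$ with $\|Q^k\|\leq 1$, and carrying out the same argument on $2u-\delta(1\otimes\mathbf 1)$ together with a rescaling (legitimate by interiority) improves these to strict norm bounds, producing self-adjoint scalar matrices $C_k$ with $0<C_k<I_N$ and $b_k=\sum_{r,s}(C_k)_{rs}a_{rs}$ — exactly the data in the definition. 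Part (1) then follows: $\cl W_{2,2}$ is C*-nuclear by Theorem \ref{thm-W22}, so $\cl A\otimes_{\min}\cl W_{2,2}=\cl A\otimes_{\max}\cl W_{2,2}$ for every $\cl A$, and the converse with $m=2$ supplies the $2$-partition of unity property. Part (2) follows because the equivalence turns ``$M_n(\cl A)$ has the $m$-partition of unity property for every $n$'' into ``$\cl A\otimes_{\min}\cl W_{m,2}=\cl A\otimes_{\max}\cl W_{m,2}$ completely'', and by Theorem \ref{thm-main} the latter holds for every $m$ — equivalently for $m=3$, since $(k,n)=(2,3)\neq(2,2)$ — exactly when $\cl A$ is nuclear, the cases $m\leq 2$ carrying no information by part (1).

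The step I expect to be the main obstacle is this last unwinding: extracting, from a plain membership $2u=\sum_{r,s}a_{rs}\otimes q_{rs}$ in the maximal cone, the normalized data the definition insists on — a genuinely positive $[a_{rs}]$ whose diagonal sums to $1$ and scalar matrices lying strictly between $0$ and $I_N$. That is where the compression-and-congruence bookkeeping lives and where the strict inequalities $0<b_k<1$ are consumed; everything else amounts to routine matching of the two cone descriptions.
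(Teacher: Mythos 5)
Your proposal is correct and follows essentially the same route as the paper: it establishes the same key equivalence between the $m$-partition of unity property and the order isomorphism $\cl B\otimes_{\max}\cl W_{m,2}\subseteq \cl B\otimes\ell^{\infty}_{2m}$ (including the normalization of the unit coefficient by conjugation, the entangled-vector description of the maximal cone, the support-projection/partial-inverse reduction to $Q^0=I_N$, and the final $\epsilon$-shrinking to obtain strict bounds), and then deduces (1) from Theorem \ref{thm-W22} and (2) from Theorem \ref{thm-main} by passing to matrix levels. The only differences are cosmetic (you transfer the congruence onto the scalar coefficient matrices rather than onto the vector $x$, and you work with the $0<b_k<1$ formulation instead of the paper's $-1<b_k<1$ reformulation).
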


If $\cl A$ is a commutative C*-algebra then a careful analysis of our proof indicates that, for fixed
$0 < f_1,...,f_m <1$, the matrices [$a_{ij}$] and $C_k = [c_{ij}^{k}]$ appears in the 
definition can be chosen diagonal, therefore the above definition captures a well-known partition of unity property for continuous functions on
a compact set based on Arzel\`{a}-Ascoli theorem.

$ $

Preliminary section briefly summarizes the basics in operators systems such as duality, quotients and tensor products.
We also recall some fundamental facts regarding the nuclearity related properties, in particular,
Choi, Effros and Lance's nuclearity characterization and Kirchberg's WEP characterization. 
The second section is devoted for the proofs of the main results. Third section will include the partition of unity property for C*-algebras.
In appendix we will prove that C*-nuclearity is preserved under duality.


\section{Preliminaries}

In this section we establish the terminology and state basic definitions and results that
shall be used throughout the paper. An operator system can be defined concretely as a unital $*$-closed subspace of $B(H)$, bounded linear
transformations acting on a Hilbert space $H$. We refer the reader to \cite{PaulsenBook} for
the abstract characterization of these objects via compatible, strict collection of matricial cones
along with an Archimedean matrix order unit \cite{ChoiEffros77}. A map between operator systems
$ \varphi: \cl S \rightarrow \cl T$ is called completely positive, cp in short, if the $n^{\rm th}$-amplification
$id_n \otimes \varphi: M_n \otimes \cl S \rightarrow M_n \otimes \cl T$ is positive for all $n$.
If $\varphi$ is also unital, i.e. $\varphi(1_{\cl S}) = 1_{\cl T}$, we will say that $\varphi$ is a ucp map.
$\mathfrak{S}_n(\cl S) = \{ \varphi: \cl S \rightarrow M_n: \; \varphi \mbox{ is ucp } \}$ denotes the $n^{\rm th}$ matricial state space of $\cl S$.

\subsection{Duality} The Banach dual $\cl S^*$ of an operator system $\cl S$ can be endowed with a matricial
order structure via matricial cp maps. More precisely, after defining the self-adjoint idempotent $*$ via $f^*(s) = \overline{f(s^*)}$, we declare
$$
(f_{ij}) \in M_n(\cl S^*) \mbox{ positive if } \cl S \ni s \longmapsto (f_{ij}(s)) \in M_n \mbox{ is cp}. 
$$
The collection of the cones of the positive elements $\{M_n(\cl S^*)^+\}_{n=1}^\infty$ forms a strict, compatible matricial order structure on $\cl S^*$.
In general an Archimedean matrix order may fail to exist for this matricially ordered space. If dim$(\cl S) < \infty$ then a
faithful state $w$ on $\cl S$ can be assigned an Archimedean order unit for $\cl S^*$ \cite{ChoiEffros77}.


\subsection{Quotients} A subspace $J \subset \cl S$ is called a kernel if $J$
is kernel of a ucp map defined from $\cl S$ (equivalently kernel of a cp map).
A kernel is typically a non-unital $*$-closed subspace but these properties, in general, do not characterise a kernel.
A matricial order structure on the algebraic quotient $\cl S/J$ can be defined
by
$$
Q_n = \{ (s_{ij} + J): (s_{ij}) \in M_n(\cl S)^+) \}.
$$
The Archimedeanization process, in other words, completion of the
cones $\{Q_n\}$ relative to order topology induced by $(e +J) \otimes I_n$ (see \cite{Paulsen-Tomforde}, \cite{KPTT Tensor}),
yields the operator system quotient $\cl S/J$.
The universal property of the quotient ensures that if $\varphi: \cl S \rightarrow \cl T$
is a ucp map then the the induced map $\dot{\varphi}: \cl S/{\rm ker}(\varphi) \rightarrow \cl T$ is again a ucp map \cite{farenick--paulsen2011}.
$\varphi$ is called a \textit{quotient}  (resp.,\textit{ complete quotient}) map if $\dot{\varphi}$ is an order (resp. a complete order)  inclusion.
In particular a surjective ucp map $\varphi$ is completely quotient if and only if the adjoint $\varphi^\dag: \cl T^* \rightarrow \cl S^*$ is a complete order inclusion.



\subsection{Minimal tensor product} For operator systems $\cl S$ and $\cl T$ we define
$$
C_n^{\min} = \{ [x_{ij}] \in M_n(\cl S \otimes \cl T): \; [(\phi \otimes \psi)(x_{ij})] \geq 0 \;
\forall \phi \in \mathfrak{S}_p(\cl S),\; \psi \in \mathfrak{S}_q(\cl T),\; \forall\; p,q  \}.
$$
The collection of cones $\{C_n^{\min}\}_{n=1}^\infty$ forms a strict compatible matricial ordering for the algebraic tensor $\cl S \otimes \cl T$.  
Moreover, $1_{\cl S} \otimes 1_{\cl T}$ is a  Archimedean matricial order unit. Therefore the triplet
$
(\cl S \otimes \cl T, \{C_n^{\min}\}_{n=1}^\infty, 1_{\cl S} \otimes 1_{\cl T})
$
forms an operator system which we call the minimal tensor product of $\cl S$ and $\cl T$ and denote by $\cl S\otimes_{\min} \cl T$. 
We refer the reader to \cite{KPTT Tensor} for details. The minimal tensor product is spatial, injective and functorial. By the representation
of the minimal tensor we mean
CP$(\cl S, \cl T) \cong (\cl S^*\otimes_{\min} \cl T)^+
$
for any operator systems $\cl S$ and $\cl T$ with dim$(\cl S)<\infty$ \cite{kavruk2011}.

\subsection{Maximal tensor product} Let $\cl S$ and $\cl T$ be two operator systems. We define
$$
D_n^{\max} = \{ X^* (S \otimes T) X:  \; S \in M_p(\cl S)^+, \; T \in M_q(\cl T)^+,\; X \mbox{ is }pq\times n \mbox{ matrix}, \; p,q \in \bb N \}.
$$
The collection of the cones $\{D_n^{\max}\}_{n=1}^\infty$ are strict and compatible. Moreover, $1\otimes 1$ is a matricial order unit
for the matrix ordered space $(\cl S \otimes \cl T, \{D_n^{\max}\})$. Nonetheless $1\otimes 1$ may fail to be Archimedean, which can be resolved by
Archimedeanization process. We define
$$
C_n^{\max} = \{  X \in M_n(\cl S \otimes \cl T): X + \epsilon (1\otimes 1)_n \in D_n^{\max} \mbox{ for all } \epsilon >0    \}.
$$
The collection $\{C_n^{\max} \}$ forms a strict, compatible matrix ordering on $\cl S\otimes \cl T$
for which $1\otimes 1$ is an Archimedean matrix order unit. We let $\cl S\otimes_{\max} \cl T$
denote the resulting tensor product. max is functorial and projective \cite{KPTT Tensor}, \cite{Han}.
By the representation of the maximal tensor product we mean the canonical identification
CP$  (\cl S \otimes_{\max} \cl T, \bb C ) \cong$CP$(\cl S, \cl T^*)$.

\subsection{Some nuclearity related results} The nuclearity criteria of a 
C*-algebra $\cl A$ via injectivity of the bidual
von Neumann algebra $\cl A^{**}$ is essential for our work \cite{ChoiEffros}.
We also need the injectivity characterization of a von Neumann
algebra $\cl B \subset B(H)$ given by continuity of
$$
\cl B \otimes_{\min} \cl B' \rightarrow B(H), \;\; b \otimes b' \rightarrow bb',
$$ 
where $\cl B'$ denotes the commutant of $\cl B$ \cite{EffrosLance}. 
A C*-algebra $\cl A$ is said to have the \textit{weak expectation property} (WEP) (or weak injectivity) if the canonical inclusion of $A$ into $\cl A^{**}$ decomposes through
an injective object via ucp maps. We will need the following:
\begin{theorem}[Kirchberg, \cite{Kirchberg94}] 
A unital C*-algebra $\cl A$ has WEP if and only if
$$
\cl A \otimes_{\min}  C^*(\bb F_{\infty}) = \cl A \otimes_{\max}  C^*(\bb F_{\infty}).
$$
\end{theorem}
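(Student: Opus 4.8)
The plan is to prove both implications through two structural facts about $F:=C^*(\bb F_\infty)$; since the identity map $\cl A\otimes_{\max}F\to\cl A\otimes_{\min}F$ is always ucp, only the reverse containment of the matricial cones --- that min-positivity forces max-positivity --- requires work. \emph{Fact 1 (lifting property).} Because $\bb F_\infty$ is free, a ucp map out of $F$ is prescribed with no relations on the generating unitaries; lifting the image of each generator in a quotient $\cl C/\cl I$ to a contraction and passing to its standard $2\times 2$ unitary dilation produces a unital $*$-homomorphism $F\to M_2(\cl C)$ that lifts the given map after compression. Hence every ucp map from $F$ into a quotient lifts, so $F$ has the (local) lifting property and, consequently, $B(H)\otimes_{\min}F=B(H)\otimes_{\max}F$ for every $H$. \emph{Fact 2 (Lance's reformulation).} A unital C*-algebra $\cl A$ has WEP if and only if, in its universal representation $\cl A\subseteq B(H)$, the embedding is max-injective, i.e. $\cl A\otimes_{\max}\cl B\to B(H)\otimes_{\max}\cl B$ is a complete order embedding for every C*-algebra $\cl B$ \cite{Lance2}; this is just a tensorial restatement of the factorization $\cl A\hookrightarrow B(H)\to\cl A^{**}$ through an injective object.

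For the forward direction, assume $\cl A$ has WEP. Fact 2 applied with $\cl B=F$ gives a complete order embedding $\cl A\otimes_{\max}F\hookrightarrow B(H)\otimes_{\max}F$, and by Fact 1 the target equals $B(H)\otimes_{\min}F$, into which $\cl A\otimes_{\min}F$ also embeds completely order isomorphically because the minimal tensor product is injective and spatial. Both embeddings restrict to the same algebraic inclusion $\cl A\odot F\subseteq B(H)\odot F$, so for each $n$ an element of $M_n(\cl A\odot F)$ is min-positive iff its image in $M_n(B(H)\otimes_{\min}F)$ is positive iff (Fact 1) it is positive in $M_n(B(H)\otimes_{\max}F)$ iff (Fact 2) it is max-positive. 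The matricial cones therefore coincide, proving $\cl A\otimes_{\min}F=\cl A\otimes_{\max}F$.

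For the converse, assume the tensor identity. Running the previous computation in reverse yields max-injectivity of $\cl A$ in $B(H)$ \emph{against the single algebra $F$}:
$$
\cl A\otimes_{\max}F=\cl A\otimes_{\min}F\hookrightarrow B(H)\otimes_{\min}F=B(H)\otimes_{\max}F .
$$
By Fact 2 it remains to upgrade this to max-injectivity against every $\cl B$. Reducing to separable unital $\cl B$, choose a surjection $\rho:F\twoheadrightarrow\cl B$ carrying the free generators to a unitary generating family; since the maximal tensor product is projective, $\id\otimes\rho$ is a complete quotient map both over $\cl A$ and over $B(H)$. The propagation I would carry out exploits the freeness of $F$: any finite positive configuration in $\cl A\odot\cl B$ witnessing a failure of max-injectivity lifts, relation-free, to $\cl A\odot F$, where the embedding against $F$ already rules it out. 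Dually, one uses the representation $\mathrm{CP}(\cl A\otimes_{\max}\cl B,\bb C)\cong\mathrm{CP}(\cl A,\cl B^*)$ together with the characterization of complete quotient maps by complete order inclusions of their adjoints, transporting the obstruction into the minimal picture, where injectivity of min applies.

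The main obstacle is exactly this converse upgrade. A naive diagram chase is insufficient: complete quotient maps only permit lifting elements with control of the norm \emph{from above}, so chasing the isometric embedding $\cl A\otimes_{\max}F\hookrightarrow B(H)\otimes_{\max}F$ through $\id\otimes\rho$ reproduces merely the automatic contractivity of $\cl A\otimes_{\max}\cl B\to B(H)\otimes_{\max}\cl B$, never the reverse inequality. It is precisely the non-injectivity of the maximal tensor product that must be defeated, and this forces a substantive use of the lifting property of $F$ --- lifting the offending positive elements themselves rather than only their norms. Establishing Fact 1 via unitary dilations and invoking Fact 2 are the remaining load-bearing inputs, but both are standard; the genuine difficulty is localized in the converse propagation.
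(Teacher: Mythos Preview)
The paper does not prove this theorem; it is quoted in the preliminaries as a background result from \cite{Kirchberg94}, with no argument supplied. So there is nothing on the paper's side to compare against, and your proposal has to stand on its own.

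Your forward direction is correct and is the standard argument: WEP gives max-injectivity of $\cl A\subseteq B(H)$ (Lance), the lifting property of $C^*(\bb F_\infty)$ gives $B(H)\otimes_{\min}F=B(H)\otimes_{\max}F$, and injectivity of min closes the square.

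The converse, however, has a genuine gap --- one you yourself flag but do not close. From the hypothesis you correctly extract the isometric embedding $\cl A\otimes_{\max}F\hookrightarrow B(H)\otimes_{\max}F$. Your plan is to push this through the complete quotients $\id\otimes\rho$ (for $\rho:F\twoheadrightarrow\cl B$) to obtain max-injectivity against every $\cl B$, by ``lifting positive configurations relation-free.'' But an algebraic lift $\tilde u\in\cl A\odot F$ of $u\in\cl A\odot\cl B$ yields only
\[
\|u\|_{\cl A\otimes_{\max}\cl B}\ \le\ \|\tilde u\|_{\cl A\otimes_{\max}F}\ =\ \|\tilde u\|_{B(H)\otimes_{\max}F}
\quad\text{and}\quad
\|u\|_{B(H)\otimes_{\max}\cl B}\ \le\ \|\tilde u\|_{B(H)\otimes_{\max}F},
\]
two inequalities pointing the same way; no contradiction can be assembled from them. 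To make the chase work you would need a lift $\tilde u$ with $\|\tilde u\|_{B(H)\otimes_{\max}F}$ close to $\|u\|_{B(H)\otimes_{\max}\cl B}$, and nothing in the freeness of $\bb F_\infty$ provides that --- it would amount to a lifting property for $B(H)$, not for $F$. The dual reformulation you sketch via adjoints of complete quotient maps runs into the same wall.

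The standard proof of the converse abandons Fact~2 and constructs the weak expectation directly. With $\cl A\subseteq B(H)$ in its universal representation, take a surjection $\pi:C^*(\bb F)\twoheadrightarrow\cl A'$ for a free group of sufficient cardinality (a short inductive-limit/conditional-expectation argument reduces from arbitrary $\bb F$ to $\bb F_\infty$). The multiplication map $\cl A\otimes_{\max}\cl A'\to B(H)$, precomposed with $\id\otimes\pi$, is ucp on $\cl A\otimes_{\max}F$; by hypothesis it is min-continuous, so Arveson extends it to $\Phi:B(H)\otimes_{\min}F\to B(H)$. A multiplicative-domain argument then shows that $x\mapsto\Phi(x\otimes 1)$ is an $\cl A'$-bimodule ucp map on $B(H)$ fixing $\cl A$, hence takes values in $\cl A''=\cl A^{**}$. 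That map is the weak expectation. This use of the commutant and Arveson extension is the missing idea your proposal needs for the converse.
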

\noindent Here $\bb F_{\infty}$ can be replaced by $G_1*G_2$ for any discrete groups with cardinality $|G_1| \geq 2$, $|G_2| \geq 2$ with $|G_1| +|G_2| \geq 5$ \cite{Harpe}. Following duality characterization of
the minimal and maximal tensor products will be needed:
\begin{theorem}[Farenick, Paulsen] For finite dimensional operator systems $\cl S$ and $\cl T$ we have the complete order 
isomorphisms
$
(\cl S \otimes_{\min} \cl T)^* = \cl S^* \otimes_{\max} \cl T^* $ and $
(\cl S \otimes_{\max} \cl T)^* = \cl S^* \otimes_{\min} \cl T^*.
$
\end{theorem}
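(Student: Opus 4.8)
The plan is to prove the second identity $(\cl S \otimes_{\max} \cl T)^* = \cl S^* \otimes_{\min} \cl T^*$ directly from the two representation theorems recorded above, and then to obtain the first identity $(\cl S \otimes_{\min} \cl T)^* = \cl S^* \otimes_{\max} \cl T^*$ from it by duality, using that every finite dimensional operator system is reflexive, $\cl R^{**} \cong \cl R$ completely order isomorphically. Throughout, the underlying linear isomorphism is the canonical algebraic map $\theta: \cl S^* \otimes \cl T^* \to (\cl S \otimes \cl T)^*$ determined by $\theta(f \otimes g)(s \otimes t) = f(s)g(t)$; since all spaces are finite dimensional this is a vector space isomorphism, and it is unital for the canonical choices of order units (tensor products of faithful states), so the entire content is the identification of the matricial cones carried by $\theta$.

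First I would settle the scalar level. By the representation of the maximal tensor product, the positive cone $((\cl S \otimes_{\max} \cl T)^*)^+$ is identified with $\mathrm{CP}(\cl S, \cl T^*)$; by the representation of the minimal tensor product applied to the pair $(\cl S, \cl T^*)$ (legitimate since $\dim \cl S < \infty$), the cone $(\cl S^* \otimes_{\min} \cl T^*)^+$ is likewise identified with $\mathrm{CP}(\cl S, \cl T^*)$. A direct unravelling of the two identifications shows that both are implemented by $\theta$, so $\theta$ matches the scalar positive cones.

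Next, the matricial levels. Because $M_n$ is nuclear, $M_n(\cl R) = M_n \otimes_{\min} \cl R = M_n \otimes_{\max} \cl R$ for any operator system $\cl R$, and the dual matrix order satisfies $M_n(\cl R^*)^+ = \mathrm{CP}(\cl R, M_n)$ by definition. The strategy is to identify, for every $n$, both $M_n((\cl S \otimes_{\max} \cl T)^*)^+$ and $M_n(\cl S^* \otimes_{\min} \cl T^*)^+$ with the single cone $\mathrm{CP}(\cl S, M_n(\cl T^*))$. On the maximal side this is the matricial form of the representation of $\otimes_{\max}$: a cp map $\cl S \otimes_{\max} \cl T \to M_n$ is the same datum as a jointly completely positive bilinear map $\cl S \times \cl T \to M_n$, equivalently a cp map $\cl S \to M_n(\cl T^*)$. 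On the minimal side I would absorb the ampliation into the second factor, $M_n(\cl S^* \otimes_{\min} \cl T^*) = \cl S^* \otimes_{\min} M_n(\cl T^*)$ by associativity and injectivity of $\otimes_{\min}$, and then apply the representation of $\otimes_{\min}$ with target $M_n(\cl T^*)$ to identify its cone with $\mathrm{CP}(\cl S, M_n(\cl T^*))$. Checking once more that $\theta$ realizes both identifications upgrades the scalar statement to a complete order isomorphism.

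Finally, I would deduce the first identity. Applying the proven identity to $\cl S^*, \cl T^*$ gives $(\cl S^* \otimes_{\max} \cl T^*)^* = \cl S^{**} \otimes_{\min} \cl T^{**} = \cl S \otimes_{\min} \cl T$ by reflexivity; dualizing and invoking reflexivity of $\cl S^* \otimes_{\max} \cl T^*$ then yields $\cl S^* \otimes_{\max} \cl T^* = (\cl S \otimes_{\min} \cl T)^*$, as desired. I expect the main obstacle to be the matricial representation of the maximal tensor product, namely making precise the passage between cp maps out of $\cl S \otimes_{\max} \cl T$ into $M_n$ and cp maps $\cl S \to M_n(\cl T^*)$, together with the verification that the Archimedeanization built into $\otimes_{\max}$ does not enlarge the relevant cone. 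This is where the scalar representation theorem, quoted only for functionals, must be bootstrapped to matrices, and it is precisely the step at which the finite-dimensionality hypothesis is indispensable.
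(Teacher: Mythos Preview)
The paper does not give a proof of this theorem: it is quoted in the preliminaries as a result of Farenick and Paulsen \cite{farenick--paulsen2011} and used as a black box later on, so there is no in-paper argument to compare against.

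That said, your proposal is sound and is in fact the standard route. The scalar step is exactly the combination of the two ``representations'' recorded just above the theorem: positive functionals on $\cl S \otimes_{\max} \cl T$ are precisely cp maps $\cl S \to \cl T^*$, and these are precisely the positive elements of $\cl S^* \otimes_{\min} \cl T^*$. For the matricial step, your absorption $M_n(\cl S^* \otimes_{\min} \cl T^*) = \cl S^* \otimes_{\min} M_n(\cl T^*)$ together with the identification $\mathrm{CP}(\cl S \otimes_{\max} \cl T, M_n) \cong \mathrm{CP}(\cl S, M_n(\cl T^*))$ is correct; the latter follows because a cp map into $M_n$ tested on the generators $X^*(S \otimes T)X$ of $D_n^{\max}$ is exactly the ``joint complete positivity'' condition, and the Archimedean closure passing from $D_n^{\max}$ to $C_n^{\max}$ is harmless since cp maps into $M_n$ are automatically continuous for the order-unit norm. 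The reflexivity argument deducing the first identity from the second is clean and uses only $\cl R^{**}\cong \cl R$ for finite dimensional $\cl R$, which is standard (Choi--Effros). The one point you flag as a potential obstacle --- bootstrapping the scalar representation of $\otimes_{\max}$ to matrix levels --- is indeed the only place requiring a sentence of care, but it goes through as you outline.
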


\section{Main Results}

Consider an abstract operator system $\cl S$ and a unital C*-algebra $\cl A$ such that $\cl S$ is an $\cl A$-bimodule. 
We let $\cdot$ denote the action of $\cl A$ on $\cl S$ 
which satisfies
$ a \cdot 1_{\cl S} = 1_{\cl S} \cdot a \mbox{ for all } a \in \cl A.
$
If in addition the action is compatible with the matricial positivity structure of $\cl S$, that is,
$$
[a_{ij}] \cdot [s_{ij}] \cdot [a_{ij}]^* \in M_{n}(\cl S)^+ \mbox{ for every } [s_{ij}] \in M_n(\cl S)^+ \mbox{ and } [a_{ij}] \in M_n(\cl A),
$$
we shall call $\cl S$ an operator $\cl A$-system (see \cite{PaulsenBook}). A ucp map $\varphi: \cl S \rightarrow \cl T$, where $\cl S$ and $\cl T$ 
are operator $\cl A$-systems, is called an $\cl A$-bimodule map 
if $\varphi(a\cdot s) = a \cdot \varphi(s)$ and $\varphi(s\cdot a) = \varphi(s)\cdot a$ for all $a\in \cl A$ and $s \in \cl S$. Finally,
an operator subsystem $S_0$ of an operator $\cl A$-system $\cl S$ is called an \textit{operator $\cl A$-subsystem} if $a\cdot s_0 \in \cl S_0$
for every $s_0 \in \cl S_0$.

\begin{proposition}
Let $\cl S,$ $\cl T$ be operator $\cl A$-systems and $\cl S_0$ be a operator $\cl A$-subsystem of $\cl S$. If
$\varphi: \cl S_0 \rightarrow \cl T$ is a ucp $\cl A$-bimodule map then every ucp
extension $\tilde \varphi: \cl S \rightarrow \cl T$ of $\varphi$ is also a $\cl A$-bimodule map.
\end{proposition}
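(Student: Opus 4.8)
The plan is to reduce the statement to the classical fact that a C*-subalgebra on which a ucp map restricts to a $*$-homomorphism lies in its multiplicative domain, and then to read the bimodule identity off from the defining property of the multiplicative domain. The point is that, although $\cl S$ and $\cl T$ are only operator systems, the module actions give genuine C*-subalgebras inside them, namely the images of $\cl A$.

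First I would fix concrete realizations carrying the actions: by the correspondence between abstract and concrete operator $\cl A$-systems (see \cite{PaulsenBook}), one may assume $\cl S \subseteq B(H)$ and $\cl T \subseteq B(K)$ together with unital $*$-representations $\rho: \cl A \to B(H)$ and $\sigma: \cl A \to B(K)$ such that $1_{\cl S} = 1_H$, $1_{\cl T} = 1_K$, and $a\cdot s = \rho(a) s$, $s\cdot a = s\rho(a)$ for $s\in\cl S$, $a\cdot t = \sigma(a) t$, $t\cdot a = t\sigma(a)$ for $t\in\cl T$. In this picture $\cl A_{\cl S} := \rho(\cl A) = \cl A\cdot 1_{\cl S}$ is a unital C*-subalgebra sitting inside $\cl S$, and likewise $\cl A_{\cl T} := \sigma(\cl A) \subseteq \cl T$.

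Next I would observe that $\varphi$, hence $\tilde\varphi$, is forced to be a $*$-homomorphism on $\cl A_{\cl S}$. Since an operator subsystem contains the unit and $\cl S_0$ is an operator $\cl A$-subsystem, $\cl A_{\cl S} = \cl A\cdot 1_{\cl S}\subseteq \cl S_0$; using that $\varphi$ is a unital $\cl A$-bimodule map, $\varphi(\rho(a)) = \varphi(a\cdot 1_{\cl S}) = a\cdot\varphi(1_{\cl S}) = a\cdot 1_{\cl T} = \sigma(a)$ for every $a\in\cl A$, and since $\tilde\varphi$ agrees with $\varphi$ on $\cl S_0 \supseteq \cl A_{\cl S}$ we also get $\tilde\varphi(\rho(a)) = \sigma(a)$. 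Thus $\tilde\varphi|_{\cl A_{\cl S}}$ is the unital $*$-homomorphism $\sigma\circ\rho^{-1}$ (well-defined because $\rho(a)=0$ forces $\sigma(a)=\tilde\varphi(0)=0$). Then I would extend $\tilde\varphi$ to a ucp map $\Phi: B(H)\to B(K)$ by Arveson's extension theorem; $\Phi$ restricts to a unital $*$-homomorphism on the C*-subalgebra $\cl A_{\cl S}\subseteq B(H)$, so $\cl A_{\cl S}$ lies in the multiplicative domain of $\Phi$, whence $\Phi(\rho(a) x) = \sigma(a)\Phi(x)$ and $\Phi(x\rho(a)) = \Phi(x)\sigma(a)$ for all $a\in\cl A$, $x\in B(H)$ (the multiplicative domain theorem, \cite{PaulsenBook}). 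Specializing $x = s\in\cl S$ and using $a\cdot s = \rho(a) s\in\cl S$, $s\cdot a = s\rho(a)\in\cl S$ together with $\Phi|_{\cl S} = \tilde\varphi$ gives $\tilde\varphi(a\cdot s) = \sigma(a)\tilde\varphi(s) = a\cdot\tilde\varphi(s)$ and $\tilde\varphi(s\cdot a) = \tilde\varphi(s)\cdot a$, which is the assertion.

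The main obstacle I anticipate is the very first step: producing a concrete realization in which the $\cl A$-action becomes operator multiplication by a fixed $*$-representation with order unit equal to the identity operator. This is the standard operator-module representation, but its use should be stated carefully; as a self-contained alternative one can run the multiplicative-domain argument abstractly, verifying by hand the relevant $2\times 2$ matrix positivities over $\cl A$ (to see that $a\mapsto a\cdot 1_{\cl S}$ is a unital completely positive $*$-homomorphism onto a C*-subalgebra of $\cl S$) and over $\cl S$, and then extracting equality in the Schwarz inequality for $\tilde\varphi$ on that subalgebra.
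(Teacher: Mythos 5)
Your argument is correct, but it is a genuinely different route from the paper's. The paper never leaves the abstract setting: it writes down an explicit positive $3\times 3$ matrix $\bigl[\begin{smallmatrix} 1 & 1 & s\\ 1 & 1 & s\\ s^* & s^* & 1\end{smallmatrix}\bigr]$, conjugates it by $\mathrm{diag}(1,a,1)$, applies $3$-positivity of $\tilde\varphi$, and then conjugates the image by a suitable $2\times 3$ scalar-and-module matrix to force the entries $\tilde\varphi(a^*\cdot s)-a^*\cdot\tilde\varphi(s)$ and $\tilde\varphi(s^*\cdot a)-\tilde\varphi(s^*)\cdot a$ to vanish; in effect it proves the multiplicative-domain phenomenon by hand, using only that the extension is $3$-positive and staying entirely within the abstract operator $\cl A$-system axioms. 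You instead invoke three standard theorems: the representation theorem realizing an abstract operator $\cl A$-system concretely with the action implemented by a unital $*$-representation (Paulsen's book, the same source the paper cites for operator $\cl A$-systems, and the fact used in the proof of Theorem 6.7 of \cite{KPTT Tensor}), Arveson's extension theorem to pass from $\tilde\varphi$ to a ucp map on all of $B(H)$, and Choi's multiplicative domain theorem applied to the C*-subalgebra $\rho(\cl A)$, on which $\Phi$ is a $*$-homomorphism because $\Phi(\rho(a))=\varphi(a\cdot 1_{\cl S})=\sigma(a)$ (your well-definedness check for $\rho(a)=0$ is the right thing to verify). Each step is sound and there is no circularity, so the proof works; what it buys is a short conceptual argument assembled from standard machinery, whereas the paper's computation buys self-containedness --- it needs no concrete realization (itself a nontrivial theorem via injective envelopes), no Arveson extension, and records the slightly finer information that mere $3$-positivity of the extension suffices. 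Your suggested ``self-contained alternative'' of running the multiplicative-domain argument abstractly is, in substance, exactly the paper's proof.
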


\begin{proof}
Let $s$ be an element in $ \cl S$ with $\|s\| \leq 1$. It is not hard to show that
$$
\left[\begin{array}{ccc}
1 & 1 & s \\
1 & 1 & s \\
s^* & s^* & 1 
\end{array}\right] \geq 0 \mbox{ in } M_3(\cl S).
$$
In fact, if one considers $\cl S$ as a concrete operator subsystem of a C*-algebra
then the above element can be written as $[1\; 1 \; s]^* [1 \; 1\; s] + [0 \; 0 \; (1 - s^*s)^{1/2}]^*[0 \; 0 \; (1 - s^*s)^{1/2}]. $ 
(In the following matrices we will drop the module action notation $\cdot$ for simplicity.) Thus,
$$
\left[\begin{array}{ccc}
1 & 0 & 0 \\
0 & a^* & 0 \\
0 & 0 & 1 
\end{array}\right]
\cdot
\left[\begin{array}{ccc}
1 & 1 & s \\
1 & 1 & s \\
s^* & s^* & 1 
\end{array}\right] \cdot
\left[\begin{array}{ccc}
1 & 0 & 0 \\
0 & a & 0 \\
0 & 0 & 1 
\end{array}\right]
=
\left[\begin{array}{ccc}
1 & a & s \\
a^* & a^*a & a^*s \\
s^* & s^*a & 1
\end{array}\right] \geq 0.
$$
Since $\tilde \varphi$ is 3-positive, we have
$$
\left[\begin{array}{ccc}
\tilde \varphi(1) & \tilde \varphi(a) & \tilde \varphi(s) \\
\tilde \varphi(a^*) & \tilde \varphi(a^*a) & \tilde \varphi(a^*s) \\
\tilde \varphi(s^*) & \tilde \varphi(s^*a) & \tilde \varphi(1)
\end{array}\right] = 
\left[\begin{array}{ccc}
1 & a & \tilde \varphi(s) \\
a^* & a^*a & \tilde \varphi(a^*s) \\
\tilde \varphi(s^*) & \tilde \varphi(s^*a) & 1
\end{array}\right] \geq 0.
$$
This means that the following multiplication
$$
\left[\begin{array}{rrr}
a^* & -1 & 0 \\
0 & 0 & -1
\end{array}\right] \cdot
\left[\begin{array}{ccc}
1 & a & \tilde \varphi(s) \\
a^* & a^*a & \tilde \varphi(a^*s) \\
\tilde \varphi(s^*) & \tilde \varphi(s^*a) & 1
\end{array}\right] \cdot
\left[\begin{array}{rr}
a & 0 \\
-1 & 0 \\
0 & -1 
\end{array}\right]
$$
which is equal to
$$
\left[\begin{array}{cc}
0 & \tilde \varphi(a^*s)-a^*\tilde \varphi(s) \\
\tilde \varphi(s^*a)-\tilde \varphi(s^*)a & 1 
\end{array}\right]
$$
must be positive. So $\varphi(a^* \cdot s) = a^* \cdot \tilde \varphi(s)$ and $\tilde \varphi(s^* \cdot a)=\tilde \varphi(s^*) \cdot a$, which proves our claim.
\end{proof}

Our special interest on the theory of operator $\cl A$-systems arises from the following:
if $\cl S$ is an operator system and $\cl A$ is a C*-algebra then $\cl A \otimes_{\max} \cl S$
is an operator $\cl A$-system with the action given by
$
a \cdot (b\otimes s) = (ab) \otimes s
$
and $ (b\otimes s) \cdot a= (ba) \otimes s$.
The reader may refer to the proof of \cite[Theorem 6.7]{KPTT Tensor} for this fact.


\begin{proof}[proof of Theorem \ref{thm-main}]
Since a nuclear C*-algebra is in particular a (min,max)-nuclear operator system we only work on the non-trivial direction.
Fix $n\geq 2, \; k\geq 2$ with $(n,k)\neq (2,2)$. Let $\pi: \cl A \rightarrow B(H)$ be a unital $*$-homomorphism.
We may consider $B(H)$ as an operator $\cl A$-system with the action $a\cdot T = \pi(a)T$.
Note that our assumption in the theorem is equivalent to unital complete order inclusion
$$
\cl A \otimes_{\max} \cl W_{n,k} \subset \cl A \otimes \ell^{\infty}_{nk}.
$$
We may view $\cl A \otimes_{\max} \cl W_{n,k}$ as an operator $\cl A$-subsystem of  $\cl A \otimes \ell^{\infty}_{nk}$.
Let $\varphi: \cl W_{n,k} \rightarrow \pi(\cl A)'$ be a ucp map, where $'$ denotes the commutant. We wish to show that
$\varphi$ extends to a ucp map from $\ell^\infty_{nk}$ to $\pi(\cl A)'$.
Notice that the map $\pi \otimes \varphi : \cl A \otimes_{\max} \cl W_{n,k} \rightarrow  B(H)$ given by $a\otimes s \mapsto \pi(a) \varphi(s)$
is a ucp map which is also an $\cl A$-bimodule map.
Let $\tilde \varphi : \cl A \otimes \ell^{\infty}_{nk} \rightarrow B(H)$ be a ucp extension of $\varphi$.
By the previous proposition $\tilde \varphi$ is a also a $\cl A$-bimodule map. This means that
$$
\pi(a)\tilde \varphi(1 \otimes s) = a \cdot \tilde \varphi(1 \otimes s) = \tilde \varphi( a \cdot  (1 \otimes s) ) =
\tilde \varphi( a \otimes s) = \tilde \varphi( (1 \otimes s) \cdot a ) = \tilde \varphi(1 \otimes s) \pi(a)
$$
for any $a\in \cl A$ and $s \in \ell_{nk}^{\infty}$. If we define $\tilde \varphi_0 : \ell_{nk}^{\infty} \rightarrow B(H)$
by $\tilde \varphi_0 (s) = \tilde \varphi (1_{\cl A} \otimes s)$ then the image of $\tilde \varphi_0$ is contained
in $\pi(\cl A)'$. Thus we have proven that every ucp map $\varphi: \cl W_{n,k} \rightarrow \pi(\cl A)'$
extends to ucp map on $\ell_{nk}^{\infty}$. Since $\pi(\cl A)'$ is a von Neumann algebra
it follows that every completely positive (cp) map from $\cl W_{n,k}$ to $\pi(\cl A)'$  extends to cp
map on  $\ell_{nk}^{\infty}$ \cite{Effros-Ruan Book}. Let $Q: (\ell_{nk}^{\infty})^* \rightarrow \cl W_{n,k}^* $
be the completely quotient proximinal map obtained by taking 
the adjoint of the inclusion $\cl W_{n,k} \subset  \ell^{\infty}_{nk}.$
By the representation of minimal tensor product we have that
$$
(\ell_{nk}^{\infty})^* \otimes \pi(\cl A)' \xrightarrow{ Q \otimes id }\cl W_{n,k}^* \otimes_{\min} \pi(\cl A)'
$$
is a quotient map. In fact, a positive element $x$ in $\cl W_{n,k}^* \otimes_{\min} \pi(\cl A)'$
corresponds to a cp map $\varphi_x: \cl W_{n,k} \rightarrow \pi(\cl A)' $. Then its cp extension $\tilde \varphi_x$ on
$\ell^{\infty}_{nk}$ yields the desired positive representation of $x$ in $(\ell_{nk}^{\infty})^* \otimes \pi(\cl A)'$. Also note that,
by the projectivity of the maximal tensor product
$$
(\ell_{nk}^{\infty})^* \otimes \pi(\cl A)' \xrightarrow{ Q \otimes id }\cl W_{n,k}^* \otimes_{\max} \pi(\cl A)'
$$
is also a completely quotient map. (Here we are using the fact that $(\ell_{nk}^{\infty})^* \cong \ell_{nk}^{\infty}$
is nuclear.) Thus we obtain
$$
\cl W_{n,k}^* \otimes_{\min} \pi(\cl A)'  = \cl W_{n,k}^* \otimes_{\max} \pi(\cl A)'
$$
\textit{order} isomorphically. A moment of thought shows that these two operator systems must be completely
order isomorphic. In fact,
$$
M_n(\pi(\cl A)') \cong (\;(\pi \oplus \cdots \oplus \pi) (A)\; )'
$$
where the direct sum includes $n$ copies of $\pi$ and is defined from $\cl A$ into $M_n(B(H))$ diagonally. Since order isomorphism holds
for a generic $\pi$, it must hold for $\pi \oplus \cdots \oplus \pi$ as well, which implies that the above order isomorphism holds completely.

Let $\rho: \cl A \rightarrow B(H)$ be the universal representation
of $\cl A$, that is, $\rho$ is the direct sum of all cyclic representations of $\cl A$ so that we have $\cl A^{**} \cong \rho(\cl A)'{ }'$
weak*-ultraweak homeomorphically \cite{pisier_intr}.
By using the identity
$
\cl W_{n,k}^* \cong \cl NP_{n,k}
$
we have that $$\cl {NP}_{n,k} \otimes_{\min} \rho(\cl A)' = \cl NP_{n,k} \otimes_{\max} \rho(\cl A)'.$$
By the WEP criteria induced by $\cl{NP}_{n,k}$ \cite{FKTT Char.WEP}, we have that $\rho(\cl A)'$ must have WEP, or equivalently, it must be injective.
A classical result of Effros and Lance  \cite[Prop. 3.7]{EffrosLance} implies that the commutant $\rho(\cl A)''$ of $\rho(\cl A)'$
is also injective. Since we identify
$\cl A^{**}$ and $\rho(\cl A)''$, a well-known result of Choi and Effros \cite{ChoiEffros}
implies that $\cl A$ is nuclear.
\end{proof}

\begin{question}
If $\cl A \otimes_{\min} \cl  W_{n,k} = \cl A \otimes_{\max} \cl W_{n,k}$ ``order'' isomorphically for all $n$ and $k$ can we conclude
that $\cl A$ is nuclear?
\end{question}

\begin{corollary} \label{cor-main}
$\rm{(1)}$ If $\cl I$ is an ideal of a nuclear unital C*-algebra $\cl A$ then $\cl A / I$ is also nuclear.

$\rm{(2)}$ If $\cl A \otimes_{\max} \cl B \subset \cl A \otimes_{\max} \cl C$ for all C*-algebras $\cl B \subset \cl C$, then $\cl A$ is nuclear.

$\rm{(3)}$ A unital C*-algebra $\cl A$ is nuclear if and only if $\cl A \otimes_{\min} C_u^*(\cl W_{3,2}) =  \cl A \otimes_{\max} C^*_u(\cl W_{3,2})$.
\end{corollary}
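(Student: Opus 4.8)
I would handle parts (1) and (2) together by isolating the following lemma, which is the argument of Theorem~\ref{thm-main} with the non-commutative polygon removed: \emph{if a unital C*-algebra $\cl A$ has the property that $\cl A\otimes_{\max}\cl B$ embeds completely order isomorphically in $\cl A\otimes_{\max}\cl C$ for every inclusion of unital C*-algebras $\cl B\subseteq\cl C$, then $\sigma(\cl A)'$ is injective for every unital $*$-representation $\sigma:\cl A\to B(H)$.} Granting the lemma, part (2) is immediate, since its hypothesis is exactly that of the lemma: applying the lemma to the universal representation of $\cl A$ makes $\sigma(\cl A)''=\cl A^{**}$ injective by Effros--Lance \cite{EffrosLance}, whence $\cl A$ is nuclear by Choi--Effros \cite{ChoiEffros}. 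For part (1), a nuclear $\cl A$ satisfies $\cl A\otimes_{\max}\cl B=\cl A\otimes_{\min}\cl B$ for every $\cl B$, and $\min$ is injective, so the hypothesis of the lemma holds; applying it to $\sigma=\rho\circ q$, where $q:\cl A\to\cl A/\cl I$ is the quotient $*$-homomorphism and $\rho$ is the universal representation of $\cl A/\cl I$, gives that $\rho(\cl A/\cl I)'=\sigma(\cl A)'$ is injective, so $(\cl A/\cl I)^{**}$ is injective and $\cl A/\cl I$ is nuclear.

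To prove the lemma I would mimic the proof of Theorem~\ref{thm-main}. Represent $\sigma(\cl A)'\subseteq B(H)$, and regard $B(H)$ as an operator $\cl A$-system via $a\cdot T=\sigma(a)T$ and $T\cdot a=T\sigma(a)$; then $\cl A\otimes_{\max}B(H)$ is an operator $\cl A$-system, and by hypothesis $\cl A\otimes_{\max}\sigma(\cl A)'$ sits inside it as an operator $\cl A$-subsystem. The commuting pair given by $\sigma$ and the inclusion $\iota:\sigma(\cl A)'\hookrightarrow B(H)$ induces a unital $*$-homomorphism $\cl A\otimes_{\max}\sigma(\cl A)'\to B(H)$, $a\otimes S\mapsto\sigma(a)S$, which is an $\cl A$-bimodule map. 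I would then extend it to a ucp map $\Phi:\cl A\otimes_{\max}B(H)\to B(H)$; by the bimodule-extension Proposition above, $\Phi$ is again an $\cl A$-bimodule map, so $E(T):=\Phi(1_{\cl A}\otimes T)$ is a ucp map on $B(H)$ that restricts to the identity on $\sigma(\cl A)'$ and has range inside $\sigma(\cl A)'$, since $\sigma(a)E(T)=\Phi(a\otimes T)=E(T)\sigma(a)$ for all $a$. Thus $E$ is a ucp projection of $B(H)$ onto $\sigma(\cl A)'$, so $\sigma(\cl A)'$ is injective. The points that will need checking are that $\cl A\otimes_{\max}\sigma(\cl A)'$ really is an operator $\cl A$-subsystem of $\cl A\otimes_{\max}B(H)$ --- which is precisely the hypothesis together with the invariance $a\cdot(b\otimes S)=ab\otimes S\in\cl A\otimes\sigma(\cl A)'$ --- and the routine bookkeeping of the bimodule actions.

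For part (3) the forward direction is trivial, a nuclear C*-algebra having $\min=\max$ against every C*-algebra. For the converse I would invoke Theorem~\ref{thm-main}: it suffices to show $\cl A\otimes_{\min}\cl W_{3,2}=\cl A\otimes_{\max}\cl W_{3,2}$. Since $\cl W_{3,2}\hookrightarrow C^*_u(\cl W_{3,2})$ is a unital complete order embedding, injectivity of $\min$ gives $\cl W_{3,2}\otimes_{\min}\cl A\subseteq C^*_u(\cl W_{3,2})\otimes_{\min}\cl A$. The step I expect to be the main obstacle is the analogue for $\max$, namely that $\cl W_{3,2}\otimes_{\max}\cl A$ embeds completely order isomorphically in $C^*_u(\cl W_{3,2})\otimes_{\max}\cl A$; because $\max$ is not injective, this is not formal and must use the universal property of $C^*_u$. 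Concretely, a ucp map from $\cl W_{3,2}\otimes_{\max}\cl A$ into some $B(K)$ amounts to a pair of ucp maps $\phi:\cl W_{3,2}\to B(K)$ and $\pi:\cl A\to B(K)$ with commuting ranges; $\phi$ extends to a $*$-homomorphism $\hat\phi:C^*_u(\cl W_{3,2})\to B(K)$ whose range, being generated by $\phi(\cl W_{3,2})$, still commutes with $\pi(\cl A)$, so $(\hat\phi,\pi)$ extends the original map to $C^*_u(\cl W_{3,2})\otimes_{\max}\cl A$; taking $(\phi,\pi)$ to be a faithful representation of $\cl W_{3,2}\otimes_{\max}\cl A$ then yields the desired embedding. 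With both inclusions in hand, an element $x\in\cl W_{3,2}\otimes\cl A$ that is positive in the $\min$ tensor product is positive in $C^*_u(\cl W_{3,2})\otimes_{\min}\cl A=C^*_u(\cl W_{3,2})\otimes_{\max}\cl A$ by hypothesis, hence positive in the operator subsystem $\cl W_{3,2}\otimes_{\max}\cl A$; the same argument at every matrix level gives $\min=\max$ on $\cl W_{3,2}\otimes\cl A$, and Theorem~\ref{thm-main} concludes that $\cl A$ is nuclear.
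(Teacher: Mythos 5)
Your treatment of (1) and (2) is correct, but it takes a genuinely different route from the paper's. The paper deduces all three parts from Theorem \ref{thm-main}: for (1) it transfers the identity $\cl A\otimes_{\min}\cl W_{n,k}=\cl A\otimes_{\max}\cl W_{n,k}$ to the quotient $\cl A/\cl I$ using exactness of $\cl W_{n,k}$ together with projectivity of the maximal tensor product, and for (2) and (3) it reduces to that identity by means of the cited complete order embedding $\cl A\otimes_{\max}\cl S\subset\cl A\otimes_{\max}C^*_u(\cl S)$ of \cite{kavruk--paulsen--todorov--tomforde2010}. You instead abstract the bimodule-extension mechanism from the proof of Theorem \ref{thm-main} into a lemma: if $\cl A\otimes_{\max}\cl B\subset\cl A\otimes_{\max}\cl C$ for all inclusions $\cl B\subseteq\cl C$, then for every representation $\sigma$ the ucp extension of $a\otimes S\mapsto\sigma(a)S$ is an $\cl A$-bimodule map and $T\mapsto\Phi(1\otimes T)$ is a ucp projection of $B(H)$ onto $\sigma(\cl A)'$; then Effros--Lance and Choi--Effros finish. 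This lemma and both applications are sound (for (1), nuclearity plus injectivity of $\min$ gives the hypothesis, and $\sigma=\rho\circ q$ satisfies $\sigma(\cl A)=\rho(\cl A/\cl I)$ because $q$ is onto). Your route buys a proof of (2) that never passes through universal C*-algebras, and a proof of (1) that needs no exactness of $\cl W_{n,k}$; what it loses is that (1) is no longer exhibited as a consequence of the $\cl W_{n,k}$-test of Theorem \ref{thm-main}, which is the point the corollary is meant to illustrate --- though as mathematics it is the classical bidual-injectivity argument and perfectly valid.

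In (3) your skeleton matches the paper's, but the justification of the key step, the complete order embedding $\cl W_{3,2}\otimes_{\max}\cl A\subset C^*_u(\cl W_{3,2})\otimes_{\max}\cl A$, is not correct as stated. It is not true that a ucp map from $\cl W_{3,2}\otimes_{\max}\cl A$ into $B(K)$ ``amounts to'' a commuting pair $(\phi,\pi)$ acting on the same $B(K)$: by Stinespring such a map is only a compression of a product of commuting maps on a larger space, and the tensor product whose positivity is governed by commuting pairs is the commuting tensor product $\otimes_{\rm c}$, not $\otimes_{\max}$; in particular a generic unital complete order embedding of $\cl W_{3,2}\otimes_{\max}\cl A$ has no reason to be of product form. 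What your argument actually needs is: (i) $\cl S\otimes_{\rm c}\cl A=\cl S\otimes_{\max}\cl A$ when $\cl A$ is a unital C*-algebra \cite{KPTT Tensor}; (ii) one complete order embedding of $\cl S\otimes_{\rm c}\cl A$ implemented by a single commuting pair, e.g.\ the direct sum over a suitable set of all commuting pairs; and then (iii) your extension step through the universal property of $C^*_u(\cl W_{3,2})$, which is fine once (i) and (ii) are in place. Alternatively, simply cite the inclusion $\cl A\otimes_{\max}\cl S\subset\cl A\otimes_{\max}C^*_u(\cl S)$ from \cite{kavruk--paulsen--todorov--tomforde2010}, which is exactly what the paper does; the $\min$-side inclusion is, as you say, immediate from injectivity of $\min$, and with the embedding repaired the remainder of your (3) coincides with the paper's argument.
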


\begin{proof}
(1) For any C*-algebra $\cl A$ and ideal $I \subset \cl A$ we have
$$
\frac{\cl A \otimes_{\min}\cl W_{n,k}}{\cl I \otimes_{}\cl W_{n,k}} = \frac{\cl A}{\cl I} \otimes_{\min} \cl W_{n,k} \mbox{ and }
\frac{\cl A \otimes_{\max}\cl W_{n,k}}{\cl I \otimes_{}\cl W_{n,k}} = \frac{\cl A}{\cl I} \otimes_{\max} \cl W_{n,k}.
$$
Here the first equality is direct consequence of the fact that $\cl W_{n,k}$ is exact \cite{kavruk--paulsen--todorov--tomforde2010}. The later
equality follows from the projectivity of the maximal tensor product \cite{Han}.
Thus, if $\cl A \otimes_{\min}\cl W_{n,k} = \cl A \otimes_{\max}\cl W_{n,k}$ then this is the case for $\cl A / \cl I$. The result follows from
Theorem \ref{thm-main}.

$ $

(2) In \cite{kavruk--paulsen--todorov--tomforde2010} it is shown that for any unital C*-algebra $\cl A$ and
operator system $\cl S$ we have a complete order inclusion
$$
\cl A\otimes_{\max} \cl S \subset \cl A\otimes_{\max} C^*_u(\cl S), 
$$
where $C^*_u (\cl S)$ denotes the universal C*-algebra of $\cl S$. Now, the condition in the theorem in particular implies
that
$$
\cl A \otimes_{\max} C^*_{u}(\cl W_{n,k}) \subset \cl A \otimes_{\max} C^*_{u}(\ell_{nk}^{\infty}).
$$
Combining this with the above result in \cite{kavruk--paulsen--todorov--tomforde2010}, we must have that
$$
\cl A \otimes_{\max} \cl W_{n,k} \subset \cl A \otimes_{\max} \ell_{nk}^{\infty}.
$$
Equivalently, min and max coincide on $\cl A\otimes \cl W_{n,k}$. So the result follows from
Theorem \ref{thm-main}.

$ $

(3) We only prove the non-trivial direction. As in proof of (2), we have
$$
\cl A \otimes_{\min} \cl W_{3,2} \subset  \cl A \otimes_{\min} C^*_u(\cl W_{3,2}) \;\;\mbox{ and }\;\; \cl A \otimes_{\max} \cl W_{3,2} \subset  \cl A \otimes_{\max} C^*_u(\cl W_{3,2}) 
$$
for every C*-algebra $\cl A$. Now assuming the condition in theorem we have that min and max coincide on $\cl A \otimes \cl W_{3,2}$
completely order isomorphically. By Theorem \ref{thm-main} $\cl A$ is nuclear.
\end{proof}

\begin{remark}\label{rem-polyhedra}
Following \cite{Effros} we define the non-commutative polyhedra
$$
\cl F_n = \left\{ [a_{ij}] \in M_{2n}: \sum_{i=1}^n a_{ii} = \sum_{i=n+1}^{2n} a_{ii}  \right\}.
$$
It is strightforward to see that $\cl W_{n,2}$ is embeddable into $\cl F_n$ with a conditional
expectation onto its range. In particular, if the minimal and the maximal
tensor products coincide on $\cl A \otimes \cl F_n$ then this is the case for
$\cl A \otimes \cl W_{n,2}$. Therefore we conclude that a unital C*-algebra $\cl A$ 
is nuclear if and only if there exists $n \geq 3$ such that we have a complete order ismorphism
$$
\cl A \otimes_{\min} \cl F_n = \cl A \otimes_{\max} \cl F_n,
$$
which covers the nuclearity characterization given in \cite{Effros}.
\end{remark}

\begin{proof}[proof of Theorem \ref{thm-W22}]
In Appendix we prove that C*-nuclearity is preserved under duality. Since $\cl {NP}_{2,2}$ is C*-nuclear \cite{FKPT-discrete}, its dual, namely $\cl W_{2,2}$,
is also C*-nuclear.
\end{proof}

We let $\cl U(\cl A)$ denote the unitary group of $\cl A$. The following is Corollary 5.8 of \cite{kavruk2011}:
\begin{proposition}
Let $\cl A$ and $\cl B$ be unital C*-algebras, $S \subset \cl A$ be an operator subsystem such that $C^*(S \cap \cl U(\cl A)) = \cl A$.
If $\cl B \otimes_{\min} \cl S = \cl B \otimes_{\max} \cl S $ then $\cl B \otimes_{\min} \cl A = \cl B \otimes_{\max} \cl A $.
\end{proposition}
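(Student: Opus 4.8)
The plan is to deduce the statement from injectivity of the canonical surjective $*$-homomorphism $q\colon\cl B\otimes_{\max}\cl A\to\cl B\otimes_{\min}\cl A$. Once $q$ is known to be injective the two C*-norms on the algebraic tensor product $\cl B\otimes\cl A$ coincide, and since for unital C*-algebras the operator system maximal (resp.\ minimal) tensor product agrees with the C*-algebraic one \cite{KPTT Tensor}, the claimed complete order isomorphism follows at once. So first I would fix a faithful $*$-representation $\sigma\colon\cl B\otimes_{\max}\cl A\hookrightarrow B(H)$. Restricting $\sigma$ to the commuting C*-subalgebras $\cl B\otimes 1$ and $1\otimes\cl A$ produces unital $*$-homomorphisms $\rho\colon\cl B\to B(H)$ and $\pi\colon\cl A\to B(H)$ with commuting ranges and $\sigma(b\otimes a)=\rho(b)\pi(a)$.

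Next I would feed in the hypothesis on $\cl S$. The restriction $\pi|_{\cl S}\colon\cl S\to B(H)$ is ucp and its range still commutes with $\rho(\cl B)$, so by the defining property of the maximal tensor product the pair $(\rho,\pi|_{\cl S})$ determines a ucp map $\theta\colon\cl B\otimes_{\max}\cl S\to B(H)$ with $\theta(b\otimes s)=\rho(b)\pi(s)$. By the assumed equality $\cl B\otimes_{\min}\cl S=\cl B\otimes_{\max}\cl S$, the map $\theta$ is ucp when regarded on $\cl B\otimes_{\min}\cl S$; and since $\otimes_{\min}$ is injective, $\cl B\otimes_{\min}\cl S$ sits inside $\cl B\otimes_{\min}\cl A$ via a unital complete order embedding. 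Arveson's extension theorem then yields a ucp map $\Theta\colon\cl B\otimes_{\min}\cl A\to B(H)$ extending $\theta$.

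The decisive step is a multiplicative-domain computation for $\Theta$. On the one hand $\Theta(b\otimes 1)=\rho(b)$ and $\rho$ is a $*$-homomorphism, so $\cl B\otimes 1$ lies in the multiplicative domain of $\Theta$. On the other hand, for every unitary $u\in\cl S\cap\cl U(\cl A)$ one has $\Theta(1\otimes u)=\pi(u)$, a unitary of $B(H)$, so the Schwarz inequality $\Theta(1\otimes u)^*\Theta(1\otimes u)\le\Theta(1\otimes u^*u)$ is in fact an equality, which forces $1\otimes u$ into the multiplicative domain as well. Being a C*-subalgebra, the multiplicative domain then contains $1\otimes C^*(\cl S\cap\cl U(\cl A))=1\otimes\cl A$, hence it contains both $\cl B\otimes 1$ and $1\otimes\cl A$, which generate $\cl B\otimes_{\min}\cl A$ as a C*-algebra. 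Therefore the multiplicative domain is all of $\cl B\otimes_{\min}\cl A$, i.e.\ $\Theta$ is a $*$-homomorphism, necessarily satisfying $\Theta(b\otimes a)=\rho(b)\pi(a)=\sigma(b\otimes a)$. Thus $\Theta\circ q=\sigma$, and since $\sigma$ is faithful $q$ is injective, as desired.

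The one step I expect to require real care is this multiplicative-domain argument — in particular checking that pushing the unitaries of $\cl S\cap\cl U(\cl A)$ into the multiplicative domain of $\Theta$ drags all of $1\otimes\cl A$ in (this is precisely where the hypothesis $C^*(\cl S\cap\cl U(\cl A))=\cl A$ is essential) and that $\cl B\otimes 1$ and $1\otimes\cl A$ C*-generate $\cl B\otimes_{\min}\cl A$. Everything else (Arveson extension, the universal property of $\otimes_{\max}$, injectivity of $\otimes_{\min}$, and the identification of the operator system and C*-algebraic tensor norms for C*-algebras) enters as a standard fact.
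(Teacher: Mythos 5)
Your proof is correct; the paper itself offers no argument for this proposition (it simply cites Corollary 5.8 of \cite{kavruk2011}), and your Arveson-extension-plus-multiplicative-domain argument is essentially the standard proof given in that source. The only point worth making explicit is that the commuting pair $(\rho,\pi|_{\cl S})$ a priori yields a ucp map on $\cl B\otimes_{\rm c}\cl S$ rather than on $\cl B\otimes_{\max}\cl S$; this is harmless because $\otimes_{\rm c}=\otimes_{\max}$ when one tensor factor is a unital C*-algebra (\cite{KPTT Tensor}), or simply because the hypothesis $\min=\max$ forces every intermediate tensor product, including the commuting one, to coincide with both.
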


\begin{proof}[proof of Theorem \ref{thm-Farenick-Paulsen Spaces}]
As a first step we will sharpen the property $\mathfrak{M}$ of \cite{farenick--paulsen2011}
by proving the following: if $n \geq 3$ then a canonical  complete order isomorphism of $\cl A \otimes_{\min} (M_n/J_n) $ and $\cl A \otimes_{\max} (M_n/J_n)$
implies that $\cl A$ has WEP. This is based on the representation of $M_n/J_n$ given in  \cite{farenick--paulsen2011}, in fact, the enveloping
C*-algebra  $C_e^*(M_n/J_n)$ is C*-isomorphic to $ C^*(\bb F_{n-1})$, moreover, $M_n/J_n$ can be identified, say by $i$, with a unitary operator subsystem
of $C^*(\bb F_{n-1})$ in the sense that the unitaries in $i(M_n/J_n)$ generates $ C^*(\bb F_{n-1})$ as a C*-algebra.
Now by Corollary 5.8 of \cite{kavruk2011}, the above proposition, we have that $\cl A \otimes_{\min} C^*(\bb F_{n-1}) = \cl A \otimes_{\max} C^*(\bb F_{n-1})$,
thus $\cl A$ has WEP (following from the Kirchberg's WEP characterization \cite{Kirchberg94}).

Turning back to proof of Theorem \ref{thm-Farenick-Paulsen Spaces}, if we replace the pair $\cl W_{n,k} \subset \ell_{nk}^{\infty}$
by $\cl E_n \subset M_n$ in the proof of Theorem \ref{thm-main}, since $\cl E_n^* \cong M_n / J_n$ by \cite{farenick--paulsen2011} and $M_n / J_n$
characterizes WEP as explained in the first paragraph, the proof equally holds. Similarly, if one replaces $\cl W_{n,k} \subset \ell_{nk}^{\infty}$ by $\cl U_n \subset \oplus_{i = 1}^n M_2$,
since $\cl U_n^* \cong \cl S_n$, the universal operator system generated by $n$-contraction \cite{farenick--paulsen2011},
and $\cl S_n$ characterizes WEP \cite{kavruk2011}, the proof similarly holds.
\end{proof}

In the rest of this section we will work on the formulations of Connes' embedding problem.
Recall that a linear map $\varphi: \cl S \rightarrow \cl T$, where $\cl S$ and $\cl T$ are operator systems, is called \textit{nuclear} 
if there is a net of completely positive maps $\phi_{\alpha}: \cl S \rightarrow M_{n(\alpha)}$, and $\phi_{\alpha}:  M_{n(\alpha)} \rightarrow \cl T$
such that for every $s \in \cl S$ we have $\psi_{\alpha} \circ \phi_{\alpha}(s) \rightarrow \varphi(s)$. (Here the maps $\{\phi_\alpha\}$ and $\phi_{\alpha}$ can also be taken ucp, we leave the verification to the reader.)

\begin{lemma}\label{lem-CEP1}
Let $\cl S$ and $\cl T$ be operator systems with dim$\cl (T) < \infty$. A linear map
$ f : \cl S \otimes \cl T \rightarrow \bb C$ is positive with respect to minimal tensor product
if and only if the associated map $\varphi_f: \cl S \rightarrow \cl T^* $, given by
$\varphi_f(s) (t) = f(s\otimes t)$ for $s \in \cl S$ and $t \in \cl T$, is nuclear.
\end{lemma}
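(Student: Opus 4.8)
The plan is to prove the two implications separately, in each case translating ``nuclear'' into an explicit (approximate) factorization $\cl S \xrightarrow{\phi} M_N \xrightarrow{\psi} \cl T^*$ through a full matrix algebra and matching this with the definition of the minimal tensor cone. The finite‑dimensionality of $\cl T$ is used to make all the dualities (representations of $\otimes_{\min}$ and $\otimes_{\max}$) exact and to identify positive functionals on $M_N \otimes \cl T$ with cp maps $M_N \to \cl T^*$.

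For the direction ``$\varphi_f$ nuclear $\Rightarrow$ $f$ positive with respect to $\otimes_{\min}$'', write $\varphi_f = \lim_\alpha \psi_\alpha \circ \phi_\alpha$ pointwise, with $\phi_\alpha : \cl S \to M_{n(\alpha)}$ and $\psi_\alpha : M_{n(\alpha)} \to \cl T^*$ completely positive, and let $f_\alpha$ be the linear functional on $\cl S \otimes \cl T$ associated to $\psi_\alpha \circ \phi_\alpha$. Since $\cl T$ is finite dimensional, $f_\alpha(x) \to f(x)$ for every $x \in \cl S \otimes \cl T$, and as the positive cone of $\cl S \otimes_{\min} \cl T$ is closed under pointwise limits of functionals it suffices to check that each $f_\alpha$ is positive on $\cl S \otimes_{\min} \cl T$. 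I would do this by factoring $f_\alpha = G_\alpha \circ (\phi_\alpha \otimes \mathrm{id}_{\cl T})$, where $\phi_\alpha \otimes \mathrm{id}_{\cl T} : \cl S \otimes_{\min} \cl T \to M_{n(\alpha)} \otimes_{\min} \cl T$ is cp by functoriality of $\otimes_{\min}$, and $G_\alpha : M_{n(\alpha)} \otimes_{\min} \cl T \to \bb C$ is the positive functional corresponding to $\psi_\alpha$ under the representation of $\otimes_{\max}$, $\mathrm{CP}(M_{n(\alpha)} \otimes_{\max} \cl T, \bb C) \cong \mathrm{CP}(M_{n(\alpha)}, \cl T^*)$, combined with $M_{n(\alpha)} \otimes_{\max} \cl T = M_{n(\alpha)} \otimes_{\min} \cl T$ (nuclearity of $M_{n(\alpha)}$). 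A positive functional precomposed with a cp map is positive, so each $f_\alpha \ge 0$ and hence $f \ge 0$.

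For the direction ``$f$ positive $\Rightarrow$ $\varphi_f$ nuclear'' I would approximate on the first leg. Let $\cl S_0$ range over the finite‑dimensional operator subsystems of $\cl S$, directed by inclusion (cofinal at every element of $\cl S$, since $\mathrm{span}\{1_{\cl S}, s, s^*\}$ contains $s$). Because $\otimes_{\min}$ is injective, $f$ restricts to a positive functional on $\cl S_0 \otimes_{\min} \cl T$; fixing a faithful finite‑dimensional unital representation $\cl S_0 \subset M_N$ and using injectivity of $\otimes_{\min}$ once more, this is a positive functional on a subsystem of $M_N \otimes_{\min} \cl T$, which extends by the Krein extension theorem for positive functionals to a positive functional $F_0$ on $M_N \otimes_{\min} \cl T$. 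Using $M_N \otimes_{\min} \cl T = M_N \otimes_{\max} \cl T$ and the representation of $\otimes_{\max}$, $F_0$ corresponds to a cp map $\psi_0 : M_N \to \cl T^*$, and by construction $\psi_0|_{\cl S_0} = \varphi_f|_{\cl S_0}$; thus $\varphi_f|_{\cl S_0}$ factors as $\cl S_0 \hookrightarrow M_N \xrightarrow{\psi_0} \cl T^*$. Finally, by Arveson's extension theorem (injectivity of $M_N$) the inclusion $\cl S_0 \hookrightarrow M_N$ extends to a ucp map $\phi_0 : \cl S \to M_N$, so $\psi_0 \circ \phi_0 : \cl S \to \cl T^*$ is cp and agrees with $\varphi_f$ on $\cl S_0$. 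As $\cl S_0$ varies, the resulting net $(\psi_0 \circ \phi_0)$ is eventually equal to $\varphi_f$ at each point of $\cl S$, which is the required nuclear approximation.

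The naturality bookkeeping (that the identifications supplied by the representations of $\otimes_{\min}$, $\otimes_{\max}$ and the Farenick--Paulsen duality intertwine the relevant maps and functionals) and the pointwise‑limit arguments are routine. The one point I expect to need a little care is in the converse: one must confirm that the functional produced by the Krein extension genuinely restricts to $\varphi_f$ on $\cl S_0$, and, more importantly, that the local factorizations are re‑extended by Arveson to cp maps defined on \emph{all} of $\cl S$, so that the definition of ``nuclear'' is witnessed by a bona fide net of global cp maps rather than merely local data. I do not anticipate a genuine obstacle here, since the finite‑dimensionality of $\cl T$ removes any Archimedeanization difficulties and makes ``positive functional on $M_N \otimes \cl T$'' literally the same as ``cp map $M_N \to \cl T^*$''.
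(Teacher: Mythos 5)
Your forward direction (nuclear $\Rightarrow$ positive on $\otimes_{\min}$) is fine and is essentially the paper's argument rearranged: the paper factors $f = E\circ(\varphi_f\otimes\mathrm{id})$ through $\cl T^*\otimes_{\max}\cl T$ and checks complete positivity by the same approximation through $M_{n(\alpha)}\otimes_{\min}\cl T=M_{n(\alpha)}\otimes_{\max}\cl T$, while you check positivity of each approximant $f_\alpha$ and pass to the pointwise limit; these are interchangeable.

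The converse, however, has a genuine gap at the step ``fixing a faithful finite-dimensional unital representation $\cl S_0\subset M_N$.'' An arbitrary finite-dimensional operator system does \emph{not} admit a unital complete order embedding into a matrix algebra: operator subsystems of $M_N$ are exact (exactness passes to operator subsystems, and $M_N$ is nuclear), whereas finite-dimensional operator systems such as $\mathrm{span}\{1,u_i,u_i^*\}\subset C^*(\bb F_n)$ or the non-commutative cubes are not exact. This is fatal precisely in the intended application (Theorem \ref{thm Connes}), where $\cl S=C^*(\bb F_\infty)$ and the relevant finite-dimensional subsystems are of this non-exact type. Without a complete order embedding you cannot invoke injectivity of $\otimes_{\min}$ to view $f|_{\cl S_0\otimes\cl T}$ as a positive functional on a subsystem of $M_N\otimes_{\min}\cl T$, so the Krein extension and the identification $M_N\otimes_{\min}\cl T=M_N\otimes_{\max}\cl T$ never get off the ground; extending instead to $B(H)\otimes_{\min}\cl T$ does not help, since min and max need not agree there and no matrix factorization results. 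The paper circumvents this by dualizing: since $\cl S_0$ and $\cl T$ are finite dimensional, $(\cl S_0\otimes_{\min}\cl T)^*\cong\cl S_0^*\otimes_{\max}\cl T^*$ (Farenick--Paulsen), so $f_\alpha$ is a positive element of the max tensor product and can be written as $x^*\bigl([f_{ij}]\otimes[g_{ij}]\bigr)x$ with $[f_{ij}]\in M_p(\cl S_0^*)^+$, $[g_{ij}]\in M_q(\cl T^*)^+$; this display itself produces the cp factorization $\cl S_0\xrightarrow{\;s\mapsto[f_{ij}(s)]\;}M_p\xrightarrow{\;A\mapsto x^*(A\otimes[g_{ij}])x\;}\cl T^*$, after which your final step (extend the first leg to all of $\cl S$ by Arveson and note the net is eventually equal to $\varphi_f$ at each point) goes through verbatim. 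Replacing your embedding-plus-Krein step by this duality argument repairs the proof.
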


\begin{proof} We first assume that $\varphi_f$ is nuclear and we will show that $f: \cl S \otimes_{\min} \cl T \rightarrow \bb C$
is positive. Let $\{\phi_\alpha, \phi_{\alpha}, M_{n(\alpha)}\}$ be the net appearing in the nuclearity definition of 
 $\varphi_f$. First note that
$$
\cl S \otimes_{\min} \cl T \xrightarrow{\;\;  \varphi_f \otimes id      \;\;} \cl T^* \otimes_{\max} \cl T
$$
is completely positive, in fact, $\varphi_f \otimes id$ can be approximated by the net of the composition
$$
\cl S \otimes_{\min} \cl T \xrightarrow{\;\;  \phi_\alpha \otimes id      \;\;} M_{n(\alpha)} \otimes_{\min} \cl T =
M_{n(\alpha)} \otimes_{\max} \cl T \xrightarrow{\;\;  \psi_\alpha \otimes id      \;\;} \cl T^* \otimes_{\max} \cl T.
$$
Let $E: \cl T^* \otimes_{\max} \cl T \rightarrow \bb C$ be the evaluation map, that is, $E(g\otimes t) = g(t)$. Note that the evaluation
map is positive, in fact based on the representation of maximal tensor product, $E$ corresponds to the identity $id: \cl T \rightarrow \cl T$, which is
a completely positive map. Now it is elementary to show that the following composition
$$
\cl S \otimes_{\min} \cl T \xrightarrow{\;\;  \varphi_f \otimes id      \;\;} \cl T^* \otimes_{\max} \cl T \xrightarrow{\;\; E\;\;} \bb C,
$$
which is positive, coincides with $f$. This proves one direction.

Conversely suppose that $f$ is continuous with respect to minimal tensor product. Let $\{\cl S_{\alpha}\}_{\alpha}$ be the collection of all finite dimensional operator
subsystem of $\cl S$ directed under inclusion. We let $f_\alpha: \cl S_{\alpha} \otimes_{\min} \cl T \rightarrow \bb C$ be the restriction of $f$, which is still positive. 
By using the duality of min and max \cite{farenick--paulsen2011}, $f_\alpha$ may be regarded as an element of $\cl S_\alpha^* \otimes_{\max} \cl T^*$. Now by the definition of maximal 
tensor product (and a compactness argument discussed in \cite{farenick--paulsen2011}),  there are integers $p$ and $q$, positive elements $[f_{ij}] \in M_p (\cl S_\alpha^*)$, $[g_{ij}] \in M_q (\cl T^*)$,
and a vector $x \in \bb C^p \otimes \bb C^q$ such that
$$
f_\alpha = x^* \left([f_{ij}] \otimes [g_{ij}]\right) x.
$$
We set $n(\alpha) = p$ and define $\phi_\alpha: \cl S_\alpha \rightarrow M_p$ by $\phi_\alpha (s) = [f_{ij}(s)]$, and $\psi_{\alpha}: M_p \rightarrow \cl T^*$
by $\psi_\alpha (A) = x^* (A\otimes [g_{ij}]) x$. Clearly $\phi_{\alpha}$ and $\psi_{\alpha}$
are completely positive maps with $\psi_{\alpha} \circ \phi_{\alpha} = f_\alpha$. Let
$\tilde \phi_{\alpha}$ be a completely positive extension of $\phi_\alpha: \cl S_\alpha \rightarrow M_{n(\alpha)}$. We claim that
$f$ can be approximated by $\psi_{\alpha} \circ \tilde \phi_{\alpha}$ in point norm topology. In fact, given $s\in \cl S$,
fix $\alpha_0$ so that $\cl S_{\alpha_0} \supset$span$\{1,s,s^*\}$. Note that for any $\alpha \geq \alpha_0$ we have
$\varphi_f(s) = \psi_{\alpha} \circ \tilde \phi_{\alpha} (s) $. This proves our claim.
\end{proof}

\begin{lemma}\label{lem-CEP2}
Let $\cl S_1 \subset \cl S_2$ be finite dimensional operator systems and $\cl A$ be a unital C*-algebra. We let
$Q$ denote the canonical quotient map $\cl S_2^* \rightarrow \cl S_1^*$. The following are equivalent:
\begin{enumerate}
 \item for every ucp map $\varphi: \cl S_1 \rightarrow \cl A $ and for every $\epsilon >0$, there exists
a ucp map $\psi :  \cl S_2 \rightarrow \cl A $ such that $\|\varphi - \psi|_{\cl S_1}\|_{cb} \leq \epsilon$;
 \item 
$
\cl S_2^* \otimes_{\min} \cl A \xrightarrow{\;\; Q\otimes id} \cl S_1^* \otimes_{\min} \cl A
$
is an order quotient map.
\end{enumerate}
\end{lemma}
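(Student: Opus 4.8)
The plan is to transport the whole statement through the representation of the minimal tensor product. Since $\cl S_1,\cl S_2$ are finite dimensional, I identify $(\cl S_i^*\otimes_{\min}\cl A)^+$ with $\mathrm{CP}(\cl S_i,\cl A)$; under this identification $Q\otimes id$ is a surjective unital cp map whose action on positive cones is the restriction $\Psi\mapsto\Psi|_{\cl S_1}$, $\mathrm{CP}(\cl S_2,\cl A)\to\mathrm{CP}(\cl S_1,\cl A)$. I fix a faithful state $w_2$ on $\cl S_2$ as the Archimedean order unit of $\cl S_2^*$ and set $w_1:=w_2|_{\cl S_1}$ (still faithful), so that the order unit of $\cl S_i^*\otimes_{\min}\cl A$ is $w_i\otimes 1_{\cl A}$, corresponding to the cp map $s\mapsto w_i(s)1_{\cl A}$, and $Q\otimes id$ is unital. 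Unwinding the definition of an order quotient map (the induced map on the operator system quotient by $\ker(Q\otimes id)$ is an order isomorphism), condition (2) becomes the single assertion: for every cp map $\varphi:\cl S_1\to\cl A$ and every $\epsilon>0$ there is a cp map $\Psi:\cl S_2\to\cl A$ with $\Psi|_{\cl S_1}=\varphi+\epsilon\,w_1(\cdot)1_{\cl A}$. (The reverse inclusion needed for ``order quotient'' is automatic, since restriction sends cp maps to cp maps and the cp cone is Archimedean closed.)

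For $(2)\Rightarrow(1)$ I argue directly: given a ucp map $\varphi:\cl S_1\to\cl A$ and $\epsilon>0$, pick $\Psi$ as above; since $1_{\cl S_1}=1_{\cl S_2}$ and $w_1(1_{\cl S_1})=1$ we have $\Psi(1_{\cl S_2})=(1+\epsilon)1_{\cl A}$, so $\psi:=(1+\epsilon)^{-1}\Psi$ is ucp on $\cl S_2$ and
\[
\|\psi|_{\cl S_1}-\varphi\|_{cb}=\tfrac{\epsilon}{1+\epsilon}\,\|w_1(\cdot)1_{\cl A}-\varphi\|_{cb}\le 2\epsilon ,
\]
which is (1) since $\epsilon$ was arbitrary.

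The substantive direction is $(1)\Rightarrow(2)$. Fix a cp map $\varphi:\cl S_1\to\cl A$ and $\epsilon>0$. I first absorb half of the perturbation: put $\varphi_1:=\varphi+\tfrac\epsilon2 w_1(\cdot)1_{\cl A}$, so $p:=\varphi_1(1_{\cl S_1})\ge\tfrac\epsilon2 1_{\cl A}$ is invertible and $\hat\varphi:=p^{-1/2}\varphi_1(\cdot)p^{-1/2}$ is ucp on $\cl S_1$. Applying (1) to $\hat\varphi$ gives a ucp map $\psi:\cl S_2\to\cl A$ with $\|\psi|_{\cl S_1}-\hat\varphi\|_{cb}\le\delta$ ($\delta$ to be fixed), and conjugating back, $\Psi_0:=p^{1/2}\psi(\cdot)p^{1/2}$ is cp on $\cl S_2$ with $\|\Psi_0|_{\cl S_1}-\varphi_1\|_{cb}\le\|p\|\delta$. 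Next I fix a linear projection $E:\cl S_2\to\cl S_1$ (automatically completely bounded, as $\dim\cl S_2<\infty$), extend the error $D:=\varphi_1-\Psi_0|_{\cl S_1}$ to $\tilde D:=D\circ E$ on $\cl S_2$, and propose
\[
\Psi:=\Psi_0+\tfrac\epsilon2\,w_2(\cdot)1_{\cl A}+\tilde D .
\]
Its restriction to $\cl S_1$ is $\Psi_0|_{\cl S_1}+\tfrac\epsilon2 w_1(\cdot)1_{\cl A}+D=\varphi_1+\tfrac\epsilon2 w_1(\cdot)1_{\cl A}=\varphi+\epsilon\,w_1(\cdot)1_{\cl A}$, exactly as required; and $\Psi$ is cp because $\Psi_0$ is cp, while $\tfrac\epsilon2 w_2(\cdot)1_{\cl A}$ corresponds to $\tfrac\epsilon2$ times the order unit of $\cl S_2^*\otimes_{\min}\cl A$, so every element within order-unit distance $\tfrac\epsilon2$ of it is positive, and, $\cl S_2$ being finite dimensional, the order-unit norm of an element of $\cl S_2^*\otimes_{\min}\cl A$ is at most a constant $C_{\cl S_2}$ (depending only on $\cl S_2$) times the cb-norm of the corresponding map $\cl S_2\to\cl A$; choosing $\delta$ with $C_{\cl S_2}\|E\|_{cb}\|p\|\delta\le\tfrac\epsilon2$ then makes $\tfrac\epsilon2 w_2(\cdot)1_{\cl A}+\tilde D$ cp. This yields (2).

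The main obstacle is precisely the last step of $(1)\Rightarrow(2)$: hypothesis (1) only provides an \emph{approximate} extension, and a small completely bounded perturbation of a completely positive map need not be completely positive, so the error term cannot simply be re-extended. The remedy is to reserve a slab $\tfrac\epsilon2 w_1(\cdot)1_{\cl A}$ of room in the interior of the cp cone and absorb the extended error into it, which works because on the finite dimensional dual $\cl S_2^*$ the order-unit and completely bounded norms are comparable. (Alternatively one could dualize via Lemma~\ref{lem-CEP1}, rephrasing (2) as: a linear map $\cl A\to\cl S_1$ is nuclear whenever its composition with the inclusion $\cl S_1\hookrightarrow\cl S_2$ is nuclear; but the direct argument above is cleaner.)
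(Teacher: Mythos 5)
Your proof is correct, and its skeleton is the same as the paper's: both identify $(\cl S_i^*\otimes_{\min}\cl A)^+$ with $\mathrm{CP}(\cl S_i,\cl A)$ via the representation of the minimal tensor product, view $Q\otimes \mathrm{id}$ as the restriction map $\Psi\mapsto\Psi|_{\cl S_1}$, and reduce the order-quotient condition to extendability of $\varphi+\epsilon\, w(\cdot)1_{\cl A}$. Where you differ is in how the two reductions are completed: the paper disposes of the ucp/cp discrepancy by a preliminary remark whose verification is left to the reader, and for $(1)\Rightarrow(2)$ it only establishes the norm estimate $\|u-(Q\otimes \mathrm{id})(X)\|\le\epsilon\,\dim\cl S_1$ and then appeals to the Archimedeanization process; you instead carry out the perturbation bookkeeping explicitly -- the normalization $(1+\epsilon)^{-1}\Psi$ in $(2)\Rightarrow(1)$, and in $(1)\Rightarrow(2)$ an exact positive preimage obtained by pushing the error through a projection $E$ and absorbing it into the slab $\frac{\epsilon}{2}w_2(\cdot)1_{\cl A}$, using the finite-dimensional comparison $\|X\|\le C_{\cl S_2}\|\Phi_X\|_{cb}$ (which does hold: expand $X=\sum_j\delta_j\otimes\Phi_X(s_j)$ in a dual basis of $\cl S_2^*$). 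This makes self-contained precisely the approximate-to-exact step the paper compresses into a citation. One small repair: choose $E$ to be $*$-preserving (e.g.\ replace $E$ by $s\mapsto\frac12\left(E(s)+E(s^*)^*\right)$, still a projection onto $\cl S_1$), so that $\tilde D$ is self-adjoint and the order-unit estimate $\frac{\epsilon}{2}(w_2\otimes 1)+Y\ge 0$ is applicable; for an arbitrary linear projection the corresponding tensor element need not be self-adjoint.
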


\begin{proof} We first remark that if one replace the terms ucp by completely positive (cp) in (1) we obtain an equivalent statement.
In fact, assuming that every ucp map has an approximate ucp extension, one can construct an approximate extension
for a cp map $\varphi: \cl S_1 \rightarrow \cl A$ as follows: we first fix a faithful state $w$ on $\cl S_1$
and set  $ \varphi_{\delta} = \varphi + \delta w(\cdot)1_{\cl A}$ for some small positive scalar $\delta$.
By pre and post multiplying $\varphi_{\delta}$ by $\varphi_{\delta} (1)^{-1/2}$ we obtain
a ucp map. Now, by using an approximate ucp extension of $\varphi_{\delta} (1)^{-1/2} \varphi_{\delta} (\cdot) \varphi_{\delta} (1)^{-1/2}$,
and the fact that $\|\cdot \|_{cb}$ is a norm, one can easily construct an approximate cp extension for $\varphi$.
We leave the details to the reader. Similarly, assume that every cp map on $\cl S_1$ has an aproximate cp extension on $\cl S_2$. 
Then given a ucp map $\varphi: \cl S_1 \rightarrow \cl A$, it is possible to pick the approximate cp extension unital.
We similarly leave the elementary, but tedious, verification of this fact to the reader.

(1) $\Rightarrow$ (2): As a first step we fix a faithful state $w \in \cl S_1^*$ which we
consider as an Archimedean order unit. Let $\{ \delta_1,...,\delta_n \}$
be a basis for $\cl S_1^*$ formed by positive linear functionals. By rescaling, if necessary,
we may suppose that $\delta_i \leq w$, so $\|\delta_1\| \leq 1$ in $\cl S_1^*$. We will let $\{s_1,...,s_n\}$ be the pre-dual basis for $\cl S_1$
given by $\delta_i(s_j) = \delta_{ij}$. We fix a positive element $\Sigma \delta_i \otimes a_i$ in $\cl S_1^* \otimes_{\min} \cl A$.
Let $\varphi: \cl S_1 \rightarrow \cl A$
be the corresponding cp map (based on the representation of the minimal tensor product \cite{kavruk2011}).
By our assumption, for every $\epsilon>0$, we have a cp map $\psi: \cl S_2 \rightarrow \cl A$ 
with $\|\varphi - \psi|_{\cl S_1}\|_{cb} \leq \epsilon. $ Note that in particular we have
$\|\varphi(s_i) -  \psi(s_i)  \| \leq \epsilon$ where $\varphi(s_i) = a_i$ for each $i = 1,...,n$. Let $X \in \cl S_2^* \otimes_{\min} \cl A $
be the positive element corresponding $\psi$. We claim that
$\| \Sigma \delta_i \otimes a_i -(Q \otimes id) (X)    \| \leq \epsilon\, {\rm dim}(\cl S_1)$. In fact, it is
elementary to show that
\begin{eqnarray*}
\|  \sum_{i=1}^n (\delta_i \otimes a_i) - (Q \otimes id) (X) \|   &  =  & \|  \sum_{i=1}^n \delta_i \otimes a_i - \sum_{i=1}^n \delta_i \otimes \psi(s_i)  \|  \\
								  &  =  & \| \sum_{i=1}^n  \delta_i \otimes (\psi(s_i) - \varphi(s_i))   \|  \\
								  &  \leq  &   \sum_{i=1}^n \| \delta_i    
\| \| (\psi(s_i) - \varphi(s_i))   \| \leq \epsilon \,{\rm dim}(\cl S_1).
\end{eqnarray*}
Now by the Archimedeanization process described in \cite{kavruk--paulsen--todorov--tomforde2010}, $Q \otimes id$ is an order quotient map.

$ $

(2) $\Rightarrow$ (1): Let $\varphi: \cl S_1 \rightarrow \cl A$ be a cp map and let $u \in \cl S_1^* \otimes_{\min} \cl A$
be the corresponding positive element. Since $Q \otimes id$ is an order quotient map, for every $\epsilon>0$,
we have a positive element $U \in \cl S_2^* \otimes_{\min} \cl A$ such that $(Q\otimes id) (U) = u + \epsilon\, w \otimes 1 $,
where $w$ is a fixed faithful state on $\cl S_1$.
Let $\psi :\cl S_2 \rightarrow \cl A$ be the cp map corresponding $U$.  Note that
$\varphi - \psi|_{\cl S_1}  = \epsilon \, w (\cdot ) 1_{\cl A}  $. This clearly proves that (1) holds.
\end{proof}

\begin{proof}[proof of Theorem \ref{thm Connes}] Before we proceed we recall that Connes'
embedding problem (CEP) has an affirmative solution if and only if
we have a canonical complete order isomorphism
$$
 \cl {NP}_{n,k} \otimes_{\min} C^*(\bb F_{\infty})= 
 \cl {NP}_{n,k} \otimes_{\max} C^*(\bb F_{\infty})
$$
for some $n\geq 2,$ $k\geq 2$ with $(n,k) \neq (2,2)$ \cite{FKPT-discrete}. 

As a first step we shall prove that
a canonical order isomorphism between $C^*(\bb F_{\infty}) \otimes_{\min} \cl {NP}_{n,k}$ and
$C^*(\bb F_{\infty}) \otimes_{\max} \cl {NP}_{n,k}$ implies a complete order isomorphism.
Let
$$
[x_{ij}] \geq 0 \mbox{ in } M_n( \cl {NP}_{n,k} \otimes_{\min} C^*(\bb F_{\infty})) \cong  \cl {NP}_{n,k} \otimes_{\min} M_n( C^*(\bb F_{\infty})) .
$$
We pick a finite dimensional operator subsystem $\cl S_0 $ of $M_n( C^*(\bb F_{\infty})) $ such that 
$[x_{ij}] $ belongs to $ \cl {NP}_{n,k} \otimes_{\min} \cl S_0 $.
Let $\pi: C^*(\bb F_{\infty}) \rightarrow M_n( C^*(\bb F_{\infty}))$ be a surjective $*$-homomorphism. 
Since $M_n( C^*(\bb F_{\infty}))$ has the local lifting property (see \cite{Kirchberg94} e.g.)
there exists a ucp map $\gamma: \cl S_0 \rightarrow  C^*(\bb F_{\infty})$ such that $\pi \circ \gamma = i$, 
where $i$ denotes the inclusion $\cl S_0 \subset M_n( C^*(\bb F_{\infty})) $. This means that
$ id \otimes \gamma  ([x_{ij}]) $ must be a positive element of $ \cl {NP}_{n,k} \otimes_{\min} C^*(\bb F_{\infty})$.
Since we assume that min and max coincide order ismorphically on $\cl {NP}_{n,k} \otimes C^*(\bb F_{\infty})$, it follows that 
$id\otimes \gamma ([x_{ij}]) \geq 0 $ in $  \cl {NP}_{n,k} \otimes_{\max} C^*(\bb F_{\infty})$.
By using the functoriality of the maximal tensor product we deduce that
$id \otimes \pi (  id  \otimes \gamma ([x_{ij}])  ) = [x_{ij}]$ must be positive in $\cl {NP}_{n,k} \otimes_{\max} M_n( C^*(\bb F_{\infty}))$.
This proves our claim.

(1) $\Leftrightarrow$ (2). Letting $Q: (\ell^{\infty}_{nk})^* \rightarrow \cl W_{n,k}^* \cong \cl {NP}_{n,k}$ be the 
adjoint of the inclusion $\cl W_{n,k} \subset \ell^{\infty}_{n,k}$, we can rephrase the above formulation
of the Connes' embedding problem as follows:
$$
(\ell^{\infty}_{nk})^* \otimes_{} C^*(\bb F_{\infty}) \xrightarrow{\;\; Q \otimes id\;\;}  
\cl W_{n,k}^* \otimes_{\min} C^*(\bb F_{\infty})\cong \cl {NP}_{n,k} \otimes_{\min} C^*(\bb F_{\infty})
$$
is an order quotient map. Indeed, by the projectivity of the maximal tensor product,
$$
(\ell^{\infty}_{nk})^* \otimes_{} C^*(\bb F_{\infty}) \xrightarrow{\;\; Q \otimes id\;\;}  
\cl W_{n,k}^* \otimes_{\max} C^*(\bb F_{\infty})
$$
is readily an order quotient map, thus we obtain an order isomorphism between
$\cl W_{n,k}^* \otimes_{\min} C^*(\bb F_{\infty})$ and $\cl W_{n,k}^* \otimes_{\max} C^*(\bb F_{\infty})$.
 By Lemma \ref{lem-CEP2}, this is equivalent to the statement that
every ucp map $\varphi: \cl W_{n,k} \rightarrow C^*(\bb F_{\infty})$ has an approximate
ucp extension on $\ell^{\infty}_{nk}$. This proves the equivalence of (1) and (2).

(1) $\Leftrightarrow$ (3). $ \cl {NP}_{n,k} \otimes_{\min} C^*(\bb F_{\infty})$ is order isomorphic to
 $ \cl {NP}_{n,k} \otimes_{\max} C^*(\bb F_{\infty})$ if and only if every positive linear functional
on $\cl {NP}_{n,k} \otimes_{\max} C^*(\bb F_{\infty})$ is also positive with respect to min. Based on the
representation of the maximal tensor product in \cite{kavruk2011} and Lemma \ref{lem-CEP1}, this is equivalent to
the statement that every completely positive map from $ C^*(\bb F_{\infty})$ to $(\cl {NP}_{n,k})^*$
is nuclear. Finally using the identification $(\cl {NP}_{n,k})^* \cong \cl W_{n,k}$ and noticing that every positive map
into $\cl W_{n,k,}$ is completely positive, we obtain the equivalence of (1) and (3).

\end{proof}

\section{Partition of Unity Property}

We have defined $m$-partition of unity property at the introduction. 
It is elementary to show that a unital C*-algebra $\cl A$ has $m$-partition of unity property
if and only if it satisfies the following: for every elements $b_1,...,b_m$ in $\cl A$ with
$$
-1 < b_1,...,b_m < 1
$$
there is a positive integer $n$, a positive element $[a_{ij}]$ in $M_n(\cl A)$ with $a_{11}+a_{22}+\cdots + a_{nn} = 1$,
$n\times n$ matrices $C_k = [c^k_{ij}]$ with $-I_n < C_k < I$ for $k=1,2,...,m$ such that
$$
b_k = \sum_{i,j=1}^n c^k_{i,j} a_{ij} \mbox{ for all } k=1,2,...,m.
$$
We leave the verification to the reader and in the rest we shall use this version of partition of unity property.

\begin{proposition}
A unital C*-algebra $\cl A$ has m-partition of unity property if and only if $  \cl W_{2,m} \otimes_{\max} \cl A \subset  \ell_{2m}^\infty \otimes \cl A$
order isomorphically. 
\end{proposition}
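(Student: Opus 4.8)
The plan is to first reduce the claimed order isomorphism to a positivity statement at the ground level of the maximal tensor product, and then to match, piece by piece, the definition of the maximal cone against the data appearing in the $m$-partition of unity property.

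Since $\ell^\infty_{2m}$ is nuclear, $\ell^\infty_{2m}\otimes\cl A$ carries a unique operator system structure, and the functorial map $\cl W_{2,m}\otimes_{\max}\cl A\to\ell^\infty_{2m}\otimes\cl A$ coming from $\cl W_{2,m}\subseteq\ell^\infty_{2m}$ factors through $\cl W_{2,m}\otimes_{\min}\cl A$, which sits completely order isomorphically in $\ell^\infty_{2m}\otimes\cl A$ by injectivity of $\min$. Hence the displayed inclusion is an order isomorphism onto its range exactly when $\cl W_{2,m}\otimes_{\min}\cl A$ and $\cl W_{2,m}\otimes_{\max}\cl A$ share the same cone at matrix level one; equivalently, identifying $\cl W_{2,m}\otimes\cl A$ with $\{(\alpha_1,\dots,\alpha_m,\beta_1,\dots,\beta_m)\in\bigoplus_{2m}\cl A:\sum_i\alpha_i=\sum_j\beta_j\}$, exactly when every such tuple with $\alpha_i,\beta_j\in\cl A^+$ is positive in $\cl W_{2,m}\otimes_{\max}\cl A$. (Alternatively, by the representation of the maximal tensor product together with Lemma \ref{lem-CEP1}, this is the statement that every completely positive map $\cl A\to\cl W_{2,m}^*$ is nuclear; I expect either formulation to serve, and I would argue with the first.)

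For the ``if'' direction (the partition of unity property implies the tensor identity) I would fix $u$ as above and, by the Archimedean property of the maximal tensor product, reduce to showing $u+\epsilon(1\otimes1)\in D_1^{\max}$ for each $\epsilon>0$. Then $c:=\sum_i(\alpha_i+\epsilon)$ is invertible, and since $\cl A\otimes_{\max}\cl W_{2,m}$ is an operator $\cl A$-system, conjugating by $c^{-1/2}$ (and, for the final conclusion, conjugating back by $c^{1/2}$, both of which preserve $D_1^{\max}$) reduces us to the case $\sum_i\alpha_i=\sum_j\beta_j=1$, i.e.\ two partitions of unity in $\cl A$; a further perturbation yields $0<\alpha_i,\beta_j<1$. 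Now feed the $m$-partition of unity property the two tuples $(\alpha_1,\dots,\alpha_m)$ and $(\beta_1,\dots,\beta_m)$, obtaining positive trace-one matrices $[a_{rs}],[\tilde a_{rs}]$ over $\cl A$ and scalar matrices $0<C_i,\widetilde C_j<I$ with $\alpha_i=\sum_{rs}(C_i)_{rs}a_{rs}$ and $\beta_j=\sum_{rs}(\widetilde C_j)_{rs}\tilde a_{rs}$. From these I would assemble $S\in M_p(\cl W_{2,m})^+$, $A\in M_q(\cl A)^+$ and a scalar column $X$ with $X^*(S\otimes A)X=u$: the $C_i$ and $\widetilde C_j$ (suitably shifted, so as to be positive semidefinite) supply the $2m$ coordinate blocks of $S$, the matrices $[a_{rs}],[\tilde a_{rs}]$ supply $A$, and the constraint $\sum_{l\le m}S^{(l)}=\sum_{l>m}S^{(l)}$ forcing $S\in M_p(\cl W_{2,m})$ is engineered using $\sum_{rs}\bigl(\sum_iC_i\bigr)_{rs}a_{rs}=1=\sum_{rs}\bigl(\sum_j\widetilde C_j\bigr)_{rs}\tilde a_{rs}$. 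Completing this bookkeeping places $u$, hence $u+\epsilon(1\otimes1)$, in $D_1^{\max}$.

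For the ``only if'' direction, given $b_1,\dots,b_m\in\cl A_{sa}$ with $0<b_k<1$, put $\bar b=\sum_kb_k$ and consider
$$
u=\bigl(b_1,\dots,b_m,\tfrac1m\bar b,\dots,\tfrac1m\bar b\bigr)\in\bigoplus_{2m}\cl A ,
$$
which lies in $\cl W_{2,m}\otimes\cl A$ (both block sums equal $\bar b$) and is positive in $\bigoplus_{2m}\cl A$. By hypothesis $u\ge0$ in $\cl W_{2,m}\otimes_{\max}\cl A$, so for each $\epsilon>0$ there are $S_\epsilon\in M_{p_\epsilon}(\cl W_{2,m})^+$, $A_\epsilon\in M_{q_\epsilon}(\cl A)^+$ and a scalar column $X_\epsilon$ with $u+\epsilon(1\otimes1)=X_\epsilon^*(S_\epsilon\otimes A_\epsilon)X_\epsilon$. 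Reading off the first $m$ coordinates and contracting the $M_{p_\epsilon}$ indices into $X_\epsilon$ rewrites $b_k+\epsilon=\sum_{tt'}(M^{(k)}_\epsilon)_{tt'}(A_\epsilon)_{tt'}$ with $M^{(k)}_\epsilon\ge0$ scalar; normalizing $A_\epsilon$ to have trace $1$ (conjugating by the inverse square root of $\sum_t(A_\epsilon)_{tt}$), absorbing the resulting factor and rescaling so that the scalar matrices land strictly between $0$ and $I$, one obtains partition of unity data for a perturbation of $(b_1,\dots,b_m)$; a compactness/limit argument as $\epsilon\to0$ (bounding matrix sizes along a subnet, or passing to an ultralimit) then upgrades these approximate representations to the exact identities $b_k=\sum_{ij}c^k_{ij}a_{ij}$ demanded by the definition.

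I expect the main obstacle to be precisely this dictionary between the two encodings: in the forward direction, arranging that $S$ genuinely lands in $M_p(\cl W_{2,m})^+$ — respecting the block-sum constraint while still placing $\alpha_i$ in slot $i$ of the first block and $\beta_j$ in slot $j$ of the second; in the backward direction, simultaneously securing the normalization $\sum_ia_{ii}=1$, the strict bounds $0<C_k<I$, and the passage from the $\epsilon$-approximate $D_1^{\max}$-representations to exact equalities. Everything else — nuclearity of $\ell^\infty_{2m}$, functoriality and injectivity of the tensor products, the Archimedeanization step, and the module-conjugation normalization — should be routine given the material recalled in the Preliminaries.
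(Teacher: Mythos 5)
Your overall plan (reduce the order embedding to ``every entrywise--positive element of $\cl W_{2,m}\otimes\cl A$ lies in the max cone,'' then translate back and forth to the partition-of-unity data) is the right shape, but the two translation steps you defer as ``bookkeeping'' are exactly where your argument breaks, and they break partly because you are working with the wrong model of $\cl W_{2,m}$. The paper's proof (and the very form of the $m$-partition data: \emph{one} positive $[a_{ij}]$ with $\sum_i a_{ii}=1$ together with $m$ scalar matrices squeezed between $\pm I$) is tailored to the $(m+1)$-dimensional space spanned by $w_0=(1,\dots,1)$ and $w_k$ supported as $(1,-1)$ on the $k$-th \emph{pair}, i.e.\ $m$ blocks of size two with equal pair sums; with that basis a partition datum converts verbatim into $W=I_n\otimes w_0+\sum_k C_k\otimes w_k=(I+C_1,I-C_1,\dots,I+C_m,I-C_m)\ge 0$ and the vector $x=\sum_i e_i\otimes e_i$, and conversely. (The paper's own indexing of $\cl W_{n,k}$ invites your reading, but the proof of this proposition uses the pairs model.) Under your reading, $\cl W_{2,m}\otimes\cl A\cong\{(\alpha,\beta):\sum_i\alpha_i=\sum_j\beta_j\}$, the ``if'' assembly does not go through as sketched: an element $S\in M_p(\cl W_{2,m})^+$ must have its first $m$ coordinate blocks summing to the \emph{same scalar matrix} as its last $m$, and from two unrelated partition representations $([a_{rs}],C_i)$ and $([\tilde a_{rs}],\widetilde C_j)$ the natural choice $S^{(i)}=C_i\oplus 0$, $S^{(m+j)}=0\oplus\widetilde C_j$ (with $A=[a]\oplus[\tilde a]$) forces $\sum_i C_i=0$, which is impossible, while any positive correction blocks you add to restore the constraint contribute nonnegative, generally nonzero, terms $\sum_{rs}(D_i)_{rs}\tilde a_{rs}$ to the coordinates. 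The identity $\sum_{rs}(\sum_iC_i)_{rs}a_{rs}=1=\sum_{rs}(\sum_j\widetilde C_j)_{rs}\tilde a_{rs}$ is an identity in $\cl A$, not of scalar matrices, so it cannot ``engineer'' the block-sum constraint; this is a genuine gap, not a routine verification.

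The ``only if'' direction has two further gaps. First, the $\epsilon$ you introduce is both unnecessary and fatal to your closing step: a strictly positive element of the max tensor product already lies in $D_1^{\max}$ (write $u=(u-\delta 1)+\delta 1$), so one gets an \emph{exact} representation $u=x^*(W\otimes A)x$; the paper then removes all slack by running the argument for $b_k/(1-\epsilon)$ and multiplying the resulting $C_k$ by $1-\epsilon$. Your alternative---approximate representations followed by ``bounding matrix sizes along a subnet, or passing to an ultralimit''---is unjustified, since the sizes $p_\epsilon,q_\epsilon$ occurring in max-cone representations are not bounded. Second, your normalization step is wrong in a noncommutative algebra: conjugating by $c^{-1/2}$ with $c=\sum_t(A_\epsilon)_{tt}\in\cl A$ produces a representation of $c^{-1/2}(b_k+\epsilon)c^{-1/2}$, not of $b_k$, so it does not yield the required identities $b_k=\sum_{ij}c^k_{ij}a_{ij}$ with $\sum_i a_{ii}=1$ and $0<C_k<I$. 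The paper normalizes on the \emph{scalar} side instead: the coordinate of $w_0$ carries $1_{\cl A}$ (your test element $(b_1,\dots,b_m,\bar b/m,\dots,\bar b/m)$ has no such coordinate), the pair structure gives $-C_0\le C_k\le C_0$, and $C_0$ is replaced by the identity via its support projection and a positive partial inverse absorbed into the vector $x$, which is what produces $a_{ij}=x_i^*Ax_j$ with $\sum_i a_{ii}=1_{\cl A}$ exactly. As written, then, both directions of your argument are incomplete at their decisive steps.
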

\begin{proof}
We first fix the basis
$$
1=w_0 = (1,1,...,1), \;\; w_1 = (1,-1,0,...,0), \;\; w_2 = (0,0,1,-1,0,...,0),\;\; w_m = (0,...,0,1,-1)
$$
for $\cl W_{2,m}$. First suppose that $\cl A$ has $m$-partition of unity property.
In order to prove that the ucp map $i \otimes id:  \cl W_{2,m} \otimes_{\max} \cl A \rightarrow  \ell_{2m}^\infty \otimes \cl A$
is an order embedding we must show the following:
if $\Sigma w_k \otimes a_k$ is positive in $ \ell_{2m}^\infty \otimes \cl A$ then it must be positive in $ \cl W_{2,m} \otimes_{\max} \cl A$.
A moment of thought shows that we may suppose $\Sigma w_k \otimes a_k$ is a strictly positive element in $\ell_{2m}^\infty \otimes \cl A$.
But this implies that $a_0$ must be strictly positive in $\cl A$. Since the functor $\cdot \, \otimes_{\max} \cl A$
yields an operator $\cl A$-system, by multiplying $\Sigma w_k \otimes a_k$ by $a_0^{-1/2}$ from both side we may suppose that
$a_0=1$. It is elementary to show that
$$
\sum_{k=0}^m w_k \otimes a_k > 0\;\; \mbox{ in } \ell_{2m}^\infty \otimes \cl A \;\;\;\; 
\mbox{ if and only if }\;\;\; -1< a_k < 1 \;\mbox{ for } k=1,...,m \;\mbox{ in }\; \cl A.
$$
So, let $n$, $[a_{ij}]$, $C_1,...,C_m$ be as in the above formulation of the $m$-partition of unity property. We set the vector
$x = e_1\otimes e_1 + \cdots + e_n \otimes e_n$ in $\mathbb{C}^n \otimes \mathbb{C}^n$, where $e_i$ is the vector whose $i^{th}$-entry is 1 and 0 elsewhere.
Observe that 
$
x^* ( C_k \otimes [a_{ij}] ) x = \sum c_{ij}^k a_{ij} =  a_k.
$
So we must have $x^* ( C_k \otimes w_k \otimes  [a_{ij}] ) x = w_k \otimes a_k$, for $k = 1,...,m$.
Moreover, 
$$
x^* (I_n \otimes w_0 \otimes [a_{ij}]) \, x= w_0 \otimes (a_{11} + a_{22} + \cdots + a_{nn}) = w_0 \otimes 1.
$$
where $I_n$ is the identity in $M_n$. Thus,
\begin{eqnarray*}
  \sum_{k=0}^m w_k \otimes a_k &=&  x^* (I_n \otimes w_0 \otimes [a_{ij}]) \, x + \sum_{k = 1}^mx^* ( C_k \otimes w_k \otimes  [a_{ij}] ) x \\
 &=& x^* \left( I_n \otimes w_0 + C_1 \otimes w_1 + C_2 \otimes w_2 + \cdots  C_m \otimes w_m \right) \otimes [a_{ij}] \, x.
\end{eqnarray*}
Consequently, all we need to show is that $W = I_n \otimes w_0 + C_1 \otimes w_1 + C_2 \otimes w_2 + \cdots + C_m \otimes w_m$ is positive in $M_n \otimes \cl W_{2,m}$.
As $M_n \otimes \cl W_{2,m}$ embeds in $M_n \otimes (\ell_{2m}^\infty)$, completely order isomorphically, it is sufficient to show that
$W$ is positive in $M_n \otimes (\ell_{2m}^\infty)$. But this is a direct consequence of the fact that $-I_n < C_k < I_n$ for every $k$, in fact,
$$
W = (I_n + C_1, I_n - C_1,...,I_n + C_m, I_n - C_m).
$$
Now the positivity of $\Sigma w_k \otimes a_k$ in $\cl W_{2,m} \otimes_{\max} \cl A$ follows from the definition of the construction of the maximal cone.

Conversely, suppose that the inclusion $ \cl W_{2,m} \otimes_{\max} \cl A \hookrightarrow \ell_{2m}^\infty \otimes \cl A$ is an order isomorphism.
Let $-1 < b_1,...,b_m < 1$ be given.
Since
$$
(1+b_1, 1-b_1,...,1+b_m, 1-b_m ) = \sum_{k = 0}^m w_k \otimes b_k
$$
where $b_0= 1$, is a strictly positive element of $\ell_{2m}^\infty \otimes \cl A$, it is strictly positive in $\cl W_{2,m} \otimes_{\max} \cl A$.
By the definition of maximal tensor product, there exist $W \in M_n(\cl W_{n,k})^+$, $A \in M_p(\cl A)^+$ and a vector $x \in \bb C^n \otimes \bb C^p $,
for some integers $n$ and $p$, such that
$$
\sum_{k = 0}^m w_k \otimes b_k = x^* (W \otimes A) x.
$$
We can write $W = \sum C_{i} \otimes w_i$, where $C_0,...,C_m$ are matrices in $M_n$. Since $M_n \otimes \cl W_{n,k} \subset M_n \otimes \ell_{nk}^{\infty}$
completely order isomorphically, we must have that
$$
W = \sum C_{i} \otimes w_i = (C_0 + C_1, C_0 - C_1,...,C_0 + C_m, C_0 - C_m) \geq 0
$$
in $M_n \otimes \ell_{nk}^{\infty}$. Note that this implies $C_0 \geq 0$ with $-C_0 \leq C_i \leq C_0$ for $i=1,...,m$. Since 
$$
\sum_{k = 0}^m w_k \otimes b_k = x^* (\left( \sum_{k = 0}^m C_{i}  \otimes w_i \right) \otimes A) x = \sum_{k = 0}^m w_k \otimes (x^* (C_i \otimes A) x)
$$
we obtain that $b_k = x^* (C_k \otimes A) x$, $k=0,1,...,m$ with $b_0 = 1_{\cl A}$. We claim that, by replacing $x$ if necessary,
we may suppose $C_0$ is the identity matrix. In fact, let $P$ be the support projection of $C_0$, and hence $C_0^{1/2}$ in $M_n$. 
Let $D$ be a positive partial inverse of $C_0^{1/2}$
(so that $ C_0^{1/2} D = P$). Define $y = (C_0^{1/2} \otimes I) x$. First note that
$$
1_{\cl A} = x^* (C_0 \otimes A) x = x^* (C_0^{1/2} \otimes I) (I\otimes A) (C_0^{1/2} \otimes I) x = y^* (I \otimes A) y.
$$
Now $ -C_0 \leq C_k \leq C_0$ implies that $P C_k P = C_k$. Thus, setting $\tilde C_k = D C_k D$ we have $-I \leq \tilde C_k \leq I$
and
$$
b_k = x^* (C_k \otimes A) x = x^* (C_0^{1/2} \otimes I) (\tilde C_k \otimes A) (C_0^{1/2} \otimes I) x  = y^* (\tilde C_k \otimes A) y.
$$
We can write $y = e_1 \otimes x_1 + \cdots  + e_n \otimes x_n  \in \bb C^n \otimes \bb C^p$. We define $a_{ij} = x_i^*A x_j$ for $1\leq i,j \leq n$. 
Note that
$[a_{ij}]$ is a positive element in $M_n(\cl A)$ as
$$
[x_1 \; x_ 2 \; \cdots x_n]^* A  [x_1 \; x_ 2 \; \cdots x_n] = [a_{ij}].
$$
Moreover, 
$$
a_{11} + a_{22} + \cdots + a_{nn} = \sum_{i=1}^n x_i^* A x_i = y^* (I \otimes A) y = 1_{\cl A}.
$$
Finally,
$$
b_k = y^* (\tilde C_k \otimes A) y = (\Sigma e_i \otimes x_i)^*  (\tilde C_k \otimes A) (\Sigma e_i \otimes x_i) = \sum_{i,j=1}^n c_{ij}^k a_{ij}
$$
where $c_{ij}^k$ is the $(i,j)^{th}$ entry of $\tilde C_k$.

In order to finish the proof, we need to show that the inequalities $-I_n \leq   \tilde C_k \leq I_n$ can be choosen to be strict.
In fact the condition $-1 < b_1,...,b_m < 1$ guarantees that we can find a small $\epsilon >0$
so that $$-1 < \frac{b_i}{1-\epsilon} < 1 ; \;\; i = 1,...,m.$$ If we repeat the above machinary with $b_i / (1-\epsilon)$ is replaced by $b_i$,
we can find a positive element  [$a_{ij}$] with $\Sigma a_{ii} = 1_{\cl A}$, and matrices $\tilde C_i = [c_{ij}^k]$ with $-I_n \leq C_k \leq I_n$
with $\Sigma c_{ij}^k a_{ij}= b_i / (1-\epsilon)$. Then it is straightforward to see that the matrices $(1-\epsilon) C_{k}$, which staisfies
$ -I_n < (1-\epsilon) C_{k} < I_n$, along with the positive matrix $[a_{ij}]$ can be used for the partitioning of $b_1,...,b_m$. This completes the proof.  \end{proof}

\begin{proof}[proof of Theorem \ref{POUP}] (1) By the above proposition, a unital C*-algebra has 2-partition of unity property if and only if $\cl A \otimes_{\max} \cl W_{2,2} \subset \cl A \otimes \ell_{4}^\infty$. But this is a direct consequence of Theorem \ref{thm-W22}.

(2) By the above proposition, $M_n(\cl A)$ has $m$-partition of
unity property for every $n$ if and only if $M_n(\cl A) \otimes_{\max} \cl W_{2,m} \subset M_n(\cl A) \otimes \ell_{2m}^\infty$. This is equivalent to the statement that
$$
\cl A \otimes_{\max} \cl W_{2,m} \subset \cl A \otimes \ell_{2m}^\infty
$$
completely order isomorphically. Since the operator system structure on $\cl A \otimes \cl W_{2,m} $ arising from the inclusion $\cl A \otimes \ell_{2m}^\infty$ coincide with
$\cl A \otimes_{\min} \cl W_{2,m} $, we conclude that
$M_n(\cl A)$ has $m$-partition of unity property for every $n$ if and only if
$$
\cl A \otimes_{\min} \cl W_{2,m} = \cl A \otimes_{\max} \cl W_{2,m}
$$
completely order isomorphically. But the later condition, for some $m \geq 3$ or for every $m$, is equivalent to $\cl A$ being nuclear. This finishes the proof.
\end{proof}

\section{Appendix}

In this section we prove that, for finite dimensional operator systems, C*-nuclearity is preserved under duality. 
Let $\cl S$ be a finite dimensional operator system with basis $\{s_1,...,s_n\}$ and let
$\{\delta_1,...,\delta_n\}$ be the corresponding dual basis (i.e. $\delta_i(s_j) = \delta_{ij}$).
Then we define the {\it maximally entangled element} by
$$
s_1 \otimes \delta_1 + \cdots + s_n \otimes \delta_n \in \cl S \otimes \cl S^*.
$$
This element is canonical in the sense that it is independent of a particular choice of a basis. Moreover,
$$
s_1 \otimes \delta_1 + \cdots + s_n \otimes \delta_n \geq 0 \mbox{ in } \cl S \otimes_{\min} \cl S^*.
$$
Indeed, by the representation of the minimal tensor product (i.e. $CP(\cl S,\cl S) \cong (\cl S \otimes_{\min} \cl S^*)^+$),
it is easy to see that this canonical element corresponds to the identity.

Before we proceed, recall that the {\it commuting tensor product} of two operator system, $\cl S \otimes_{\rm c} \cl T$, is defined via ucp maps with commuting ranges. More precisely, we define the cone of positive elements by
\begin{eqnarray*}
C^{\rm com}_n = \{ X \in M_n(\cl S \otimes \cl T) :& (\phi\otimes\psi)_n (X) \geq 0 \; \forall\mbox{ ucp maps }\phi: \cl S \rightarrow B(H),\, \psi: \cl T \rightarrow B(H)\\
& \mbox{with commuting ranges and for all Hilbert space } H \}.
\end{eqnarray*}
The commuting tensor product is associative. An operator system $\cl S$ is called (min,c)-nuclear if $\cl S \otimes_{min} \cl T = \cl S \otimes_{\rm c} \cl T$ for all $\cl T$. As pointed out in \cite{kavruk2011}, (min,c)-nuclerity is equivalent to C*-nuclerity.

\begin{theorem}
The following are equivalent for a finite dimensional operator system $\cl S$:
\begin{enumerate}
 \item $\cl S$ is C*-nuclear;
 \item maximally entangled element $s_1 \otimes \delta_1 + \cdots + s_n \otimes \delta_n \geq 0 $ is positive in $ \cl S \otimes_{\rm c} \cl S^*$;
 \item $\cl S^*$ is C*-nuclear.
\end{enumerate}
\end{theorem}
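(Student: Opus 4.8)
The plan is to prove the cycle $(1)\Rightarrow(2)\Rightarrow(3)\Rightarrow(1)$, where the implication $(3)\Rightarrow(1)$ will follow from $(1)\Rightarrow(3)$ applied to $\cl S^*$ once we observe that $\cl S^{**}\cong\cl S$ completely order isomorphically in finite dimensions. So the real content is the single implication $(1)\Rightarrow(3)$, which I will route through $(2)$.

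\textbf{Step 1: $(1)\Rightarrow(2)$.} This is immediate from the ordering $C_n^{\min}\subseteq C_n^{\rm c}$ together with the fact, recalled in the excerpt, that the maximally entangled element $u=\sum_i s_i\otimes\delta_i$ is positive in $\cl S\otimes_{\min}\cl S^*$ (it corresponds to $\id_{\cl S}$ under $CP(\cl S,\cl S)\cong(\cl S\otimes_{\min}\cl S^*)^+$). In fact C*-nuclearity is not even needed here; the real use of hypothesis $(1)$ is in the next step.

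\textbf{Step 2: $(2)\Rightarrow(3)$.} Let $\cl T$ be any operator system; I want $\cl S^*\otimes_{\min}\cl T=\cl S^*\otimes_{\rm c}\cl T$, i.e. $\cl S^*$ is (min,c)-nuclear, which the excerpt tells us is equivalent to C*-nuclearity. Suppose $X\geq 0$ in $M_p(\cl S^*\otimes_{\min}\cl T)$; I must show $X\in M_p(\cl S^*\otimes_{\rm c}\cl T)^+$. Write $X=\sum_i \delta_i\otimes t_i$ with $t_i\in M_p(\cl T)$. The idea is to use the maximally entangled element as a ``resolution of the identity'': consider the commuting tensor product $\cl S\otimes_{\rm c}\cl S^*\otimes_{\rm c}\cl T$ (associativity of $\otimes_{\rm c}$ is quoted in the excerpt), form the positive element $u\otimes 1_{\cl T}$ times $1\otimes X$ appropriately — more precisely slot $X$ into the last two legs and $u$ positivity from hypothesis $(2)$ into the first two — and then apply the evaluation/contraction map $\cl S\otimes\cl S^*\to\bb C$, $s\otimes\delta\mapsto\delta(s)$, on the first two legs. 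One checks this contraction is completely positive on the commuting tensor product (it is the functional corresponding to $\id_{\cl S}$, hence cp on $\otimes_{\max}$, hence on the smaller $\otimes_{\rm c}$), and that contracting $u\otimes(\text{a copy of }X)$ against it returns $X$ itself in the commuting structure on $\cl S^*\otimes_{\rm c}\cl T$. This is the standard ``snake'' argument: the maximally entangled vector lets you pull a minimal-tensor fact about $\cl S$ through a commuting-tensor computation. The bookkeeping — getting the legs, the matrix amplifications $M_p$, and the associativity brackets to line up — is where care is required, but no new idea is needed.

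\textbf{Step 3: $(3)\Rightarrow(1)$ and closing the loop.} Finite-dimensionality gives the canonical complete order isomorphism $\cl S\cong\cl S^{**}$. Applying the already-proven implication $(1)\Rightarrow(3)$ (equivalently $(2)\Rightarrow(3)$ composed with $(1)\Rightarrow(2)$) to the operator system $\cl S^*$ in place of $\cl S$ yields: $\cl S^*$ C*-nuclear $\Rightarrow$ $(\cl S^*)^*\cong\cl S$ C*-nuclear. This is exactly $(3)\Rightarrow(1)$, completing the cycle.

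\textbf{Main obstacle.} The delicate point is Step 2: verifying that the contraction map against the maximally entangled element is completely positive as a map \emph{out of the commuting tensor product} and that composing it with $u$ genuinely realizes the identity functor on $\cl S^*\otimes_{\rm c}\cl T$ rather than on some larger tensor product. One must be careful that the commuting ranges condition is preserved under the relevant amplifications and that associativity of $\otimes_{\rm c}$ is being invoked legitimately with the matrix coefficients present. The rest — the two easy implications and the duality symmetry — is routine.
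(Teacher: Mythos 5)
Your Step 1 is stated backwards, and this matters. In the operator system tensor hierarchy the commuting tensor product dominates the minimal one, i.e. $C_n^{\rm c}\subseteq C_n^{\min}$, not $C_n^{\min}\subseteq C_n^{\rm c}$; positivity of the maximally entangled element $u$ in $\cl S\otimes_{\min}\cl S^*$ therefore does \emph{not} automatically give positivity in $\cl S\otimes_{\rm c}\cl S^*$. The implication $(1)\Rightarrow(2)$ genuinely uses hypothesis (1): C*-nuclearity is (min,c)-nuclearity, so $\cl S\otimes_{\min}\cl S^*=\cl S\otimes_{\rm c}\cl S^*$ and only then does min-positivity of $u$ transfer to the commuting cone (this is exactly how the paper argues). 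Your claim that ``C*-nuclearity is not even needed here'' cannot be right: combined with your own Step 2 it would show that \emph{every} finite dimensional operator system has a C*-nuclear dual, which is false (e.g. the duals of the non-commutative polygons $\cl{NP}_{n,k}$, $(n,k)\neq(2,2)$, are not C*-nuclear).

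Step 2 also has a real gap rather than mere bookkeeping. As written the construction does not parse: elements of an operator system tensor product cannot be multiplied, and contracting the first two legs of $\cl S\otimes_{\rm c}\cl S^*\otimes_{\rm c}\cl T$ lands in $\cl T$, not in $\cl S^*\otimes_{\rm c}\cl T$, so one is forced to a four-leg element such as $u\otimes X$. But $X$ is only known to be positive for the \emph{minimal} structure, so $u\otimes X$ is positive only in a mixed product such as $(\cl S\otimes_{\rm c}\cl S^*)\otimes_{\max}(\cl S^*\otimes_{\min}\cl T)$, and the crux is then to show that the contraction $s\otimes f\otimes g\otimes t\mapsto g(s)\,f\otimes t$ is completely positive from that object into $\cl S^*\otimes_{\rm c}\cl T$ — this is precisely the kind of duality statement being proved, not routine leg-matching, and you do not supply it. There is a clean repair in the spirit of your argument: a positive $X\in M_p(\cl S^*\otimes_{\min}\cl T)\cong(\cl S^*\otimes_{\min}M_p(\cl T))^+$ corresponds (by the representation of min, $\dim\cl S<\infty$) to a cp map $\varphi_X:\cl S\to M_p(\cl T)$; applying $\id_{\cl S^*}\otimes\,\varphi_X$ to $u$, viewed in $\cl S^*\otimes_{\rm c}\cl S$ by symmetry, and using functoriality of $\otimes_{\rm c}$ together with $M_p(\cl S^*\otimes_{\rm c}\cl T)\cong\cl S^*\otimes_{\rm c}M_p(\cl T)$ (associativity of c plus nuclearity of $M_p$) gives $X\geq0$ in $M_p(\cl S^*\otimes_{\rm c}\cl T)$, i.e. $(2)\Rightarrow(3)$. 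Note this is a different route from the paper, whose hard step is $(2)\Rightarrow(1)$: there one embeds $\cl S\otimes_{\rm c}\cl S^*$ into $C^*_u(\cl S)\otimes_{\max}\cl S^*$, uses the concrete description of the max cone to factor the inclusion $\cl S\hookrightarrow C^*_u(\cl S)$ approximately through matrix algebras, and deduces C*-nuclearity of $\cl S$; your overall cycle $(1)\Rightarrow(2)\Rightarrow(3)\Rightarrow(1)$ via $\cl S^{**}\cong\cl S$ is structurally fine, but both individual steps need the corrections above before it closes.
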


\begin{proof}
Once we prove that (1) and (2) are equivalent then it is easy to observe that (3) is equivalent these two condition. 
In fact this is a direct consequence of the fact that the commuting tensor product is associative.
Moreover, since the canonical element is positive
in the minimal tensor product it is elementary to see that (1) implies (2) (recall that C*-nuclearity is equivalent to (min,c)-nuclearity).
Therefore we shall prove (2)$\Rightarrow$(1):

As a first step we wish to prove that the canonical embedding $\cl S \hookrightarrow C^*_u(\cl S)$ is nuclear.
We assume that each element of the basis $\{s_1,...,s_n\}$ is contractive. 
Since $ \cl S \otimes_{\rm c} \cl S^* \subset C^*_u(\cl S) \otimes_{\max} \cl S^*$ completely order isomorphically
it follows that
$$
s_1 \otimes \delta_1 + \cdots + s_n \otimes \delta_n \geq 0 \mbox{ in } C^*_u(\cl S) \otimes_{\max} \cl S^*.
$$
This means that for every $\epsilon > 0$ there are integers $k, m$, positive elements $(T_{ij})$ in $M_k(C^*_u(\cl S))$
and $(f_{ij})$ in $M_{m}(\cl S^*)$ and $x \in \bb C^{mk}$ (all depending on $\epsilon$) such that
$$
x^* \; (T_{ij}) \otimes (f_{ij})\; x = \Sigma s_i \otimes \delta_i + \epsilon (e \otimes \delta)
$$
where $\delta$ denotes a faithful state on $\cl S$ fixed as an Archimedean matrix order unit for $\cl S^*$.
Define
$$
\phi_{\epsilon} : \cl S \rightarrow M_m \mbox{ by } s \mapsto (f_{ij}(s)).
$$
The definition of dual matricial cones ensures that $\phi_{\epsilon}$ is completely positive.
Similarly define
$$
\psi_{\epsilon}: M_{m} \rightarrow C^*_u(\cl S) \mbox{ by } A \mapsto x^* \; (T_{ij}) \otimes A \; x.
$$
Complete positivity of $\psi_{\epsilon}$ follows from the compatibility condition of matricial
cone structure of an operator system and we skip this routine procedure. Now we fix one of the basis elements $s_{l}$.
Note that
$$
\psi_\epsilon \circ \phi_\epsilon (s_l) = x^* \; (T_{ij}) \otimes (f_{ij}(s_l))\; x = 
\Sigma s_i \delta_i (s_l) + \epsilon (e  \delta (s_l)) = s_l + \epsilon \delta(s_l)e.
$$
Consequently we deduce that
$$
\| \psi_\epsilon \circ \phi_\epsilon (s_l)  - s_l  \| = \|\epsilon \delta(s_l)e\| \leq \epsilon.
$$
Since $\{s_i\}$ is a basis it is elementary to see that for any element $s \in \cl S$
$$
\| \psi_\epsilon \circ \phi_\epsilon (s)  - s  \|  \leq C \epsilon
$$
where  $C$ solely depends on the basis $\{s_1,...,s_n\}$ and independent of $\epsilon$.
Finally by a selection of a sequence $\epsilon_n \rightarrow 0$ it is easy to see that the canonical embedding
$\cl S \hookrightarrow C^*_u(\cl S)$ is nuclear.

Now we prove that $\cl S$ is C*-nuclear. Let $\phi_i: \cl S \rightarrow M_{n(i)}$ and $\psi_i:  M_{n(i)} \rightarrow C^*_u(\cl S)$
be the cp maps that converges to the canonical embedding of $\cl S$ into $C^*_u(\cl S)$. We fix a unital C*-algebra $\cl A$.
Then
$$
\cl S \otimes_{\min} \cl A \xrightarrow{\phi_i \otimes id } M_{n(i)} \otimes_{\min} \cl A = M_{n(i)} \otimes_{\max}
\cl A \xrightarrow{\psi_i \otimes id } C^*_u(\cl S)  \otimes_{\max} \cl A
$$
is a net of cp maps converges to $i \otimes id$ in point-norm topology where $i$ denotes the inclusion of $\cl S$ into $C^*_u(\cl S)$. 
Since point-norm limit of cp maps are again cp it follows that the canonical map
$$
\cl S \otimes_{\min} \cl A  \longrightarrow C^*_u(\cl S)  \otimes_{\max} \cl A
$$
is cp. Since $\cl S \otimes_{\max} \cl A \subset C^*_u(\cl S)  \otimes_{\max} \cl A$ completely order isomorphically
it follows that the identity map
$$
\cl S \otimes_{\min} \cl A  \longrightarrow \cl S \otimes_{\max} \cl A
$$
is completely positive. This finishes the proof.
\end{proof}

$ $

{\sc Department of Mathematics, University of Illinois Urbana-Champaign

Urbana, IL 61801, U.S.A.}

{\it E-mail address:} kavruk@illinois.edu

\end{document}